\documentclass[a4paper,12pt,oneside]{article}
\usepackage{amsmath, amssymb, graphicx, amsthm, mathrsfs}
\usepackage{mathptmx}
\usepackage{mathtools} 
\usepackage{geometry}
\usepackage{enumitem}  
\usepackage{hyperref}
\geometry{margin=1in}

\newtheorem{theorem}{Theorem}[section]
\newtheorem{definition}[theorem]{Definition}
\newtheorem{lemma}[theorem]{Lemma}

\newtheorem{corollary}[theorem]{Corollary}

\newtheorem{remark}[theorem]{Remark}
\newtheorem{example}[theorem]{Example}
\numberwithin{equation}{section}

\title{Weakly Compatible Mappings and Common Fixed Points Under Generalized Contractive Conditions}

\author{Alemayehu Negash$^{*1}$ and Meaza Bogale$^2$\\
$^{1,2}$ Department of Mathematics, Hampton University,  USA  \\
$^*$ \textit{Corresponding author.} \\
\textit{Email address}: alemayehu.negash@hamptonu.edu,\\  meaza.fantahun@hamptonu.edu.}
\date{}

\begin{document}

\maketitle

\begin{abstract}
  This paper establishes new common fixed point theorems for weakly compatible mappings in metric spaces, relaxing traditional requirements such as continuity, compatibility, and reciprocal continuity. We present a unified framework for three self-mappings \(T\), \(f\), and \(g\) with a contractive condition involving a control function \(\psi\), along with corollaries extending results to pairs of mappings and upper semi-continuous control functions. Further generalizations include iterated mappings and sequences of mappings. Rigorous examples demonstrate the necessity of hypotheses and show our results strictly generalize theorems by Al-Thagafi \emph{et. al.} \cite{Al-Thagafi2006}, Babu \emph{et. al.} \cite{Babu2007}, Jungck \cite{Jungck1976,Jungck1986}, Singh \cite{Singh1986,Singh1997a}, Som \cite{Som2003}, Song \cite{Song2007} and Zhang \emph{et. al.} \cite{Zhang2008}. Key advancements include eliminating completeness assumptions on the entire space and relaxing mapping compatibility conditions. 

\textbf{Keywords:} Common fixed point, metric space, compatible maps, weakly compatible maps, reciprocal continuity.

\textbf{AMS(2000) Mathematics Subject Classification:} 47H10, 54H25.
\end{abstract}

\section{Introduction}
Fixed point theory has been extensively developed since Banach's contraction principle. This paper presents significant generalizations by:
\begin{itemize}
    \item Establishing common fixed points under \textit{weak compatibility} without continuity/compatibility
    \item Using control functions (\(\psi\), \(\varphi\)) in contractive inequalities
    \item Requiring completeness only on subspaces (\(T(K)\), \(f(K)\), or \(g(K)\))
    \item Extending results to sequences and iterated mappings
\end{itemize}
Our results unify and improve theorems by Al-Thagafi \textit{et al.} \cite{Al-Thagafi2006}, Babu \textit{et al.} \cite{Babu2007}, Jungck \cite{Jungck1976}, Singh \cite{Singh1986,Singh1997a}, Som \cite{Som2003}, Song \cite{Song2007}, and Zhang \textit{et al.} \cite{Zhang2008}.

\begin{theorem}[Jungck \cite{Jungck1976}] \label{thrm 1.1}
Let $f$ be a continuous mapping of a complete metric space $(X,d)$ into itself. Then $f$ has a fixed point in $X$ if and only if there exists $r\in [0,1)$ and a mapping $T:X\to X$ which commutes with $f$ ($fT = Tf$) and satisfies:
\begin{equation} \label{(1.1.1)}
    T(X)\subseteq f(X),
\end{equation}
\begin{equation} \label{(1.1.2)}
    d(Tx,Ty)\leq r d(fx,fy)
\end{equation}
for all $x, y\in X$.

In fact, $f$ and $T$ have a unique common fixed point in $X$.
\end{theorem}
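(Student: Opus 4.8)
The plan is to prove the two implications separately, the \emph{if} direction being the substantive one, and then to handle uniqueness of the common fixed point.

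For the \emph{only if} direction, suppose $f$ has a fixed point $p$, i.e. $fp=p$. Then I would simply take $r=0$ and let $T$ be the constant map $Tx\equiv p$. Indeed $T(X)=\{p\}\subseteq f(X)$ since $p=fp\in f(X)$; the maps commute because $fTx=fp=p=Tfx$; and $d(Tx,Ty)=0\le 0\cdot d(fx,fy)$. So the existence of $r$ and $T$ is immediate, with no use of continuity or completeness.

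For the \emph{if} direction I would run a Jungck-type iteration. Fix $x_0\in X$; using $T(X)\subseteq f(X)$ choose inductively $x_{n+1}$ with $fx_{n+1}=Tx_n$. The contractive condition gives $d(fx_{n+1},fx_{n+2})=d(Tx_n,Tx_{n+1})\le r\,d(fx_n,fx_{n+1})$, so the usual geometric-series estimate shows $\{fx_n\}$ is Cauchy; completeness of $X$ then yields $fx_n\to z$ for some $z\in X$. The key step is to upgrade this to $fz=z$: by continuity of $f$ we have $ffx_n\to fz$, and commutativity gives $Tfx_n=fTx_n=ffx_{n+1}$, so $Tfx_n\to fz$ as well. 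Applying the contraction to the pair $(fx_n,x_n)$,
\[
d(ffx_{n+1},fx_{n+1})=d(Tfx_n,Tx_n)\le r\,d(ffx_n,fx_n),
\]
and letting $n\to\infty$ gives $d(fz,z)\le r\,d(fz,z)$, hence $fz=z$ since $r<1$. Then $Tz=z$ follows since $d(Tz,Tx_n)\le r\,d(fz,fx_n)=r\,d(z,fx_n)\to 0$, while $Tx_n=fx_{n+1}\to z$, so uniqueness of limits forces $Tz=z$. Thus $z$ is a common fixed point of $f$ and $T$, and in particular $f$ has a fixed point.

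For uniqueness, if $w$ also satisfies $fw=w=Tw$ then $d(z,w)=d(Tz,Tw)\le r\,d(fz,fw)=r\,d(z,w)$, whence $z=w$. I expect the only delicate point to be the interplay of continuity and commutativity in establishing $fz=z$: one must feed the contraction the pair $(fx_n,x_n)$ rather than some pair involving $z$ directly, so that both sides converge to a multiple of $d(fz,z)$ and the factor $r<1$ can be exploited; the Cauchy estimate, the passage to the limit, and the uniqueness argument are all routine.
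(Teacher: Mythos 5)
Your proof is correct. The paper states this result without proof (it is quoted as background from Jungck's 1976 paper), and your argument --- the constant map $Tx\equiv p$ with $r=0$ for the \emph{only if} direction, and the iteration $fx_{n+1}=Tx_n$ together with the application of the contraction to the pair $(fx_n,x_n)$, using continuity and commutativity to get $d(fz,z)\le r\,d(fz,z)$, for the \emph{if} direction --- is precisely the classical Jungck argument, with the uniqueness step handled correctly as well.
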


If $T$ and $f$ satisfy the inequality \eqref{(1.1.2)} then we say that $T$ is an $f$-contraction.

Singh \cite{Singh1997a} proved the following common fixed point theorem for a pair of continuous and commuting selfmappings.

\begin{theorem}[Singh \cite{Singh1997a}] \label{thrm 1.2}
Let two continuous and commuting mappings $T$ and $f$ from a complete metric space $(X,d)$ into itself satisfy the  conditions \eqref{(1.1.1)}
\begin{equation} \label{(1.2.1)}
    d(Tx,Ty)\leq a[d(Tx,fx)+d(Ty,fy)] + b[d(Tx,fy)+d(Ty,fx)] + c\,d(fx,fy)
\end{equation}
for all $x,y\in X$, where $a\geq 0$, $b\geq 0$, $c\geq 0$ satisfying $0<2a+2b+c<1$.

Then $f$ and $T$ have a unique common fixed point in $X$.
\end{theorem}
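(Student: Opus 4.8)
The plan is to run a Jungck-type Picard iteration, extract a Cauchy sequence from its properties, pass to a limit by completeness, and then use continuity together with commutativity to turn that limit into a common fixed point; uniqueness will fall out of a direct substitution into \eqref{(1.2.1)}.

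First I would fix $x_0\in X$ and, using \eqref{(1.1.1)} repeatedly, choose $x_{n+1}$ with $fx_{n+1}=Tx_n$, writing $y_n:=fx_n=Tx_{n-1}$ for $n\ge 1$. Substituting $x=x_n$, $y=x_{n-1}$ into \eqref{(1.2.1)} and simplifying with $d(y_n,y_n)=0$ and the triangle inequality $d(y_{n+1},y_{n-1})\le d(y_{n+1},y_n)+d(y_n,y_{n-1})$, one gets $(1-a-b)\,d(y_{n+1},y_n)\le (a+b+c)\,d(y_n,y_{n-1})$. Since $0<2a+2b+c<1$ forces $a+b<\tfrac12$ (hence $1-a-b>0$) and $a+b+c<1-a-b$, the constant $k:=(a+b+c)/(1-a-b)$ lies in $[0,1)$, so $d(y_{n+1},y_n)\le k\,d(y_n,y_{n-1})$ and $\{y_n\}$ is Cauchy. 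By completeness $y_n\to z$ for some $z\in X$, i.e. $fx_n\to z$ and $Tx_n\to z$.

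Next I would establish the coincidence $Tz=fz$: continuity of $T$ gives $T(fx_n)\to Tz$, continuity of $f$ gives $f(Tx_n)\to fz$, and commutativity $Tf=fT$ makes these two limits equal. Then, to show $z$ is fixed, put $x=z$, $y=x_n$ in \eqref{(1.2.1)}; since $d(Tz,fz)=0$ and $d(Tx_n,fx_n)=d(y_{n+1},y_n)\to 0$, letting $n\to\infty$ (using $Tx_n,fx_n\to z$) yields $d(Tz,z)\le (2b+c)\,d(Tz,z)$, and $2b+c\le 2a+2b+c<1$ forces $Tz=z$, whence $fz=Tz=z$. For uniqueness, if $z'$ is another common fixed point, substituting $x=z$, $y=z'$ into \eqref{(1.2.1)} gives $d(z,z')\le(2b+c)\,d(z,z')$, so $z=z'$.

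I expect the only delicate points to be, first, the algebraic bookkeeping that converts \eqref{(1.2.1)} into a genuine contraction on consecutive iterates — in particular verifying $1-a-b>0$ and $k<1$ from the single hypothesis $0<2a+2b+c<1$ — and, second, the coincidence step, where one must feed the \emph{images} $fx_n$ and $Tx_n$ (not $x_n$ itself, which need not converge) into the continuous maps so that commutativity can legitimately identify $Tz$ with $fz$. Everything else is routine once $k\in[0,1)$ is in hand.
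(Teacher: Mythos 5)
Your proof is correct. The paper states this result (Singh's theorem) as cited background without providing a proof, and your argument is the standard Jungck-type iteration one would expect: the derivation of the contraction factor $k=(a+b+c)/(1-a-b)\in[0,1)$ from $0<2a+2b+c<1$, the identification $Tz=fz$ via continuity applied to the convergent image sequences $fx_n$ and $Tx_n$ together with commutativity, and the final estimates $d(Tz,z)\le(2b+c)\,d(Tz,z)$ for fixedness and uniqueness are all sound.
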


\begin{definition}[Jungck \cite{Jungck1986}]
Two mappings $f$ and $T$ of a metric space $(X,d)$ are called \textit{compatible} if, $\lim_{n\to\infty}d(fTx_n,Tfx_n)=0$ whenever $\{x_n\}$ is a sequence in $X$ such that $\lim_{n\to\infty}fx_n=\lim_{n\to\infty}Tx_n=t$ for some $t$ in $X$.
\end{definition}

\begin{definition}[Pant \cite{Pant1998}]
Two mappings $f$ and $T$ of a metric space $(X,d)$ are called \textit{reciprocal continuous} on $X$ if, $\lim_{n\to\infty}fTx_n=ft$ and $\lim_{n\to\infty}Tfx_n=Tt$ whenever $\{x_n\}$ is a sequence in $X$ such that $\lim_{n\to\infty}fx_n=\lim_{n\to\infty}Tx_n=t$ for some $t$ in $X$.
\end{definition}

We observe that if $f$ and $T$ are continuous then they are reciprocal continuous. But its converse need not be true \cite{Pant1998}.

Babu \emph{et al.} \cite{Babu2007} generalized Singh's Theorem \cite{Singh1997a} as follows:

\begin{theorem}[Babu \emph{et al.} \cite{Babu2007}, Corollary 4.2] \label{thrm 1.5}
Let $T$ and $f$ be selfmaps of a complete metric space $(X,d)$ into itself which satisfy $T(X)\subseteq f(X)$ and the pair $(T,f)$ be compatible. Assume that there is an $r\in[0,1)$ such that
\begin{equation}
d(Tx,Ty)\leq r\max\left\{d(fx,fy),d(Tx,fx),d(Ty,fy),\frac{1}{2}[d(Tx,fy)+d(Ty,fx)]\right\}
\end{equation}
for all $x,y\in X$. If either $T$ or $f$ is continuous, then for each $x_0\in X$, the sequence defined by $y_n=Tx_n=fx_{n+1}$, $n=0,1,2,...$ is Cauchy in $X$, and $\lim_{n\to\infty}y_n=z$ (say), $z$ in $X$ and $z$ is the unique common fixed point of $T$ and $f$.
\end{theorem}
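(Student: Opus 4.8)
The plan is to run the classical three-part Jungck-type argument: (i) build the iteration and prove it is Cauchy, (ii) pass to the limit and use continuity together with compatibility to show the limit is a common fixed point, (iii) read off uniqueness from the contractive inequality. For a given $x_0$, the inclusion $T(X)\subseteq f(X)$ lets me choose $x_n$ with $fx_{n+1}=Tx_n=:y_n$. Substituting $x=x_n$, $y=x_{n+1}$ into the hypothesis, two of the four arguments of the $\max$ collapse, one equals $d(y_{n-1},y_n)$, one equals $d(y_n,y_{n+1})$, and the triangle inequality gives $\tfrac12 d(y_{n-1},y_{n+1})\le\max\{d(y_{n-1},y_n),d(y_n,y_{n+1})\}$, so that $d(y_n,y_{n+1})\le r\max\{d(y_{n-1},y_n),d(y_n,y_{n+1})\}$. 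A short case split — if the maximum were $d(y_n,y_{n+1})$, then $d(y_n,y_{n+1})\le r\,d(y_n,y_{n+1})$ forces $d(y_n,y_{n+1})=0$ because $r<1$ — shows in every case $d(y_n,y_{n+1})\le r\,d(y_{n-1},y_n)$, hence $d(y_n,y_{n+1})\le r^{n}d(y_0,y_1)$, and the usual geometric estimate makes $\{y_n\}$ Cauchy. Completeness of $X$ yields $y_n\to z\in X$, so simultaneously $Tx_n\to z$ and $fx_n\to z$.

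Next I would identify $z$, splitting on which map is continuous. If $T$ is continuous, then $TTx_n\to Tz$ and $Tfx_n\to Tz$, while compatibility gives $d(fTx_n,Tfx_n)\to 0$, hence $fTx_n\to Tz$; feeding $x=Tx_n$, $y=x_n$ into the inequality and letting $n\to\infty$ produces $d(Tz,z)\le r\,d(Tz,z)$, so $Tz=z$. Then $z=Tz\in T(X)\subseteq f(X)$, so $fp=z$ for some $p$, and the inequality with $x=p$, $y=x_n$ gives $d(Tp,z)\le r\,d(Tp,z)$, i.e.\ $Tp=fp=z$; applying compatibility to the constant sequence $x_n\equiv p$ gives $fz=f(Tp)=T(fp)=Tz=z$, so $z$ is a common fixed point. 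If instead $f$ is continuous, the mirror computation with $x=fx_n$, $y=x_n$ (using $ffx_n\to fz$, $fTx_n\to fz$, and $Tfx_n\to fz$ via compatibility) yields $fz=z$, and then $x=z$, $y=x_n$ yields $Tz=z$. For uniqueness, if $z'$ is another common fixed point, the inequality with $x=z$, $y=z'$ collapses to $d(z,z')\le r\,d(z,z')$, forcing $z=z'$.

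I expect the main obstacle to be the bookkeeping in the middle step: one must keep straight which of the auxiliary sequences $TTx_n$, $Tfx_n$, $fTx_n$, $ffx_n$ converges to which limit, and invoke compatibility at exactly the point where the two ``diagonal'' sequences are forced together — and then pass to the limit inside the $\max$ of five distance expressions while remembering that the limiting value of the last, $\tfrac12[d(\cdot,\cdot)+d(\cdot,\cdot)]$, term coincides with $d(Tz,z)$ (resp.\ $d(fz,z)$). The only genuinely non-substitutional estimate in the whole argument is the triangle-inequality bound on that averaged term in the Cauchy step; the rest is careful case analysis, with particular attention paid to handling the degenerate case $d(y_n,y_{n+1})=0$ explicitly rather than dividing it away.
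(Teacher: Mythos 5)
Your proof is correct. Note that the paper itself gives no proof of this statement --- it is quoted verbatim from Babu \emph{et al.}\ \cite{Babu2007} as background --- so the only internal point of comparison is the paper's proof of its main result, Theorem \eqref{Thrm 3.1}, of which this is a special case. Your argument is the classical Jungck-iteration scheme and all the steps check out: the substitution $x=x_n$, $y=x_{n+1}$ does collapse the $\max$ to $\max\{d(y_{n-1},y_n),d(y_n,y_{n+1})\}$ via the triangle bound on the averaged term, the case split correctly eliminates the possibility that the maximum is $d(y_n,y_{n+1})$, and the limit computations in the identification step (including $fTx_n\to Tz$ forced by compatibility when $T$ is continuous, and the use of compatibility on the constant sequence at $p$ to get $fz=Tz$) are all valid. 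The two places where your route differs from the paper's treatment of Theorem \eqref{Thrm 3.1} are natural consequences of the stronger hypotheses here: you get Cauchyness from the geometric estimate $d(y_n,y_{n+1})\le r^n d(y_0,y_1)$, which is available because the control function is linear (the paper, working with a general $\psi$, must instead argue by contradiction with $\varepsilon$ and subsequences), and you identify the fixed point through continuity plus compatibility at the limit rather than through coincidence points plus weak compatibility (the paper's route, which is designed to avoid continuity altogether). Both buy something: the paper's method covers the non-continuous, merely weakly compatible setting, while yours is shorter and is the argument actually appropriate to the hypotheses as stated.
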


Som \cite{Som2003} extended Singh's theorem \cite{Singh1997a} to a sequence of selfmaps in the
following way:

\begin{theorem}[Som \cite{Som2003}, Theorem 3, p. 311] \label{Thrm 1.6}
Let $(X,d)$ be a complete metric space. Let $\{T_{n}\}$ be a family of selfmaps of $X$ and $f$ be a continuous selfmap of $X$ such that $(T_{n},f)$ is a compatible pair and $T_{n}(X)\subseteq f(X)$ for each $n$, and for all $x,y$ in $X$,
\begin{equation} \label{(1.6.1)}
d(T_{i}x,T_{j}y) \leq a_{1}d(T_{i}x,fx) + a_{2}d(T_{j}y,fy) + a_{3}d(T_{i}x,fy) + a_{4}d(T_{j}y,fx) + a_{5}d(fx,fy)
\end{equation}
for all $x,y\in X$, where $a_{k}\geq 0$ for $k=1,2,3,4,5$ and $a_{1}+a_{2}+a_{3}+a_{4}+a_{5}<1$. Then $\{T_{n}\}$ and $f$ have a unique common fixed point in $X$.
\end{theorem}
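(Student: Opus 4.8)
The plan is to adapt the classical Picard/Jungck iteration argument to the family $\{T_n\}$, in three stages: (i) build an iterative sequence and prove it is Cauchy, hence convergent by completeness; (ii) identify its limit $z$ as a common fixed point of $f$ and of every $T_n$, using continuity of $f$, compatibility, and the cross-map form of \eqref{(1.6.1)}; (iii) deduce uniqueness. For (i), fix $x_0\in X$; since $T_1(X)\subseteq f(X)$ choose $x_1$ with $fx_1=T_1x_0$, and inductively, since $T_{n+1}(X)\subseteq f(X)$, choose $x_{n+1}$ with $fx_{n+1}=T_{n+1}x_n$. Set $y_n=fx_{n+1}=T_{n+1}x_n$.

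To show $\{y_n\}$ is Cauchy, apply \eqref{(1.6.1)} to $d(y_{n-1},y_n)=d(T_nx_{n-1},T_{n+1}x_n)$ (with $i=n$, $j=n+1$, $x=x_{n-1}$, $y=x_n$) and also to the same distance written as $d(T_{n+1}x_n,T_nx_{n-1})$ (with the roles of $(i,x)$ and $(j,y)$ swapped); in each estimate one of the five terms drops out because $fx_n=T_nx_{n-1}=y_{n-1}$. Adding the two inequalities, using $d(y_n,y_{n-2})\le d(y_n,y_{n-1})+d(y_{n-1},y_{n-2})$, and collecting terms gives $d(y_{n-1},y_n)\le k\,d(y_{n-2},y_{n-1})$ with $k=\dfrac{a_1+a_2+a_3+a_4+2a_5}{2-(a_1+a_2+a_3+a_4)}$; the hypothesis $\sum_{i=1}^{5}a_i<1$ is precisely what makes $0\le k<1$ (and the denominator positive). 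Hence $\{y_n\}$ contracts geometrically, is Cauchy, and converges to some $z\in X$; that is, $fx_{n+1}\to z$ and $T_{n+1}x_n\to z$. A short additional application of \eqref{(1.6.1)} to $d(T_{n+1}x_{n+1},T_{n+1}x_n)$, again absorbing a trivial term, yields $(1-a_1-a_3)\,d(T_{n+1}x_{n+1},y_n)\le(a_2+a_3+a_5)\,d(y_n,y_{n-1})\to 0$, so also $T_{n+1}x_{n+1}\to z$.

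For (ii), I first show $fz=z$. Continuity of $f$ turns $fx_{n+1}\to z$ into $f(fx_{n+1})\to fz$ and $T_{n+1}x_{n+1}\to z$ into $f(T_{n+1}x_{n+1})\to fz$; feeding the pair of sequences $fx_{n+1}\to z$, $T_{n+1}x_{n+1}\to z$ into the compatibility hypothesis gives $T_{n+1}(fx_{n+1})\to fz$. Now apply \eqref{(1.6.1)} with $x=fx_{n+1}$, $y=x_n$, $i=j=n+1$ and let $n\to\infty$; the surviving terms give $d(fz,z)\le(a_3+a_4+a_5)\,d(fz,z)$, so $fz=z$. Next, for a fixed but arbitrary $m$, apply \eqref{(1.6.1)} with $x=z$, $y=x_n$, $i=m$, $j=n+1$, use $fz=z$ together with $T_{n+1}x_n,fx_n\to z$, and pass to the limit to obtain $d(T_mz,z)\le(a_1+a_3)\,d(T_mz,z)$, whence $T_mz=z$. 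Thus $z$ is a common fixed point of $f$ and of every $T_m$. For (iii), if $z'$ is another common fixed point, \eqref{(1.6.1)} with $x=z$, $y=z'$ and any indices gives $d(z,z')\le(a_3+a_4+a_5)\,d(z,z')$, forcing $z=z'$.

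The step I expect to be the main obstacle is the use of compatibility inside (ii): the hypothesis is stated for each pair $(T_n,f)$ separately, whereas the sequences that arise ($fx_{n+1}\to z$, $T_{n+1}x_{n+1}\to z$) involve a map $T_{n+1}$ whose index moves with $n$. One resolves this either by reading ``the family $\{T_n\}$ is compatible with $f$'' in the natural uniform sense (for every sequence $\{u_k\}$ and every index sequence $\{m_k\}$, $fu_k\to t$ and $T_{m_k}u_k\to t$ imply $d(fT_{m_k}u_k,T_{m_k}fu_k)\to 0$), or---more economically---by running the whole iteration in (i)--(ii) with the single map $T_1$ in place of the varying $T_{n+1}$: compatibility of the single pair $(T_1,f)$ then applies verbatim, one gets $fz=z=T_1z$, and the cross-map inequality at the end of (ii) still forces $T_mz=z$ for all $m$. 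Everything else is routine bookkeeping with the triangle inequality and with the continuity of the metric and of $f$.
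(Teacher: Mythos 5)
Your proof is correct, but it takes a genuinely different route from the paper. The paper never proves Theorem~\eqref{Thrm 1.6} directly: it is quoted from Som as background, and the paper's own derivation of it appears only in the remark following Theorem~\eqref{thrm 4.5}, where \eqref{(1.6.1)} is observed to imply the contractive condition \eqref{(4.5.2)} with $\varphi(t)=rt$, $r=a_1+\cdots+a_5$, so that the statement drops out of the paper's Theorem~\eqref{thrm 4.5} (itself built on Corollary~\eqref{Cor 3.4} and Theorem~\eqref{Thrm 3.1}). You instead give a self-contained classical Jungck--Som iteration: your Cauchy estimate via the symmetrized pair of inequalities and the constant $k=\frac{a_1+a_2+a_3+a_4+2a_5}{2-(a_1+a_2+a_3+a_4)}$ is arithmetically correct ($k<1$ is exactly $\sum a_i<1$), and the limit identifications $fz=z$ and $T_mz=z$ go through as you describe. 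What each approach buys: yours is elementary and uses the hypotheses as stated (continuity of $f$, compatibility, completeness of all of $X$); the paper's route is longer but demonstrates that those hypotheses are much stronger than necessary --- continuity and compatibility can be weakened to weak compatibility, and completeness localized to $T_1(K)$ or $f(K)$. You correctly flagged the one real danger in your argument, the diagonal use of compatibility with the index of $T_{n+1}$ moving along the sequence, which the stated hypothesis (compatibility of each pair $(T_n,f)$ separately) does not cover; your fallback of running the entire iteration with $T_1$ alone repairs this cleanly, and it is in fact the same observation the paper makes when it notes that Som's hypothesis $T_n(X)\subseteq f(X)$ for every $n$ is redundant and $T_1(K)\subseteq f(K)$ suffices.
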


From Theorem 2.1 of Babu \emph{et al.} \cite{Babu2004}, the following theorem is immediate by choosing $T$ in place of $A$ and $B$; and $f$ in place of $S$ and $T$; $c_{2}=0$ and $c_{3}=0$.

\begin{theorem}[Babu \emph{et al.} \cite{Babu2004}] \label{thrm 1.7}
Let $(X,d)$ be a metric space. Let $T$ and $f$ be selfmaps of a metric space $(X,d)$ satisfying
\begin{equation} \label{(1.7.1)}
T(X)\subseteq f(X)
\end{equation}
Assume that there exists a $c_{1}\in[0,1)$ such that
\begin{equation} \label{(1.7.2)}
d(Tx,Ty) \leq c_{1}\max\{d(Tx,fy),d(Ty,fx),d(fx,fy)\}
\end{equation}
for all $x,y\in X$. Further assume that the pair $(T,f)$ is weakly compatible, and either $T(X)$ or $f(X)$ is complete. Then $T$ and $f$ have a unique common fixed point in $X$.
\end{theorem}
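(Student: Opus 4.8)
The plan is to run the classical coincidence-point iteration, but to obtain the Cauchy property via \'Ciri\'c's orbit-diameter method, since the contraction constant $c_1$ is only assumed to lie in $[0,1)$. Fix $x_0\in X$. Using \eqref{(1.7.1)} repeatedly, choose $x_{n+1}\in X$ with $fx_{n+1}=Tx_n$ for each $n\ge 0$, and put $y_n:=Tx_n=fx_{n+1}$. Applying \eqref{(1.7.2)} to the pair $(x_m,x_n)$ and using $fx_n=y_{n-1}$, $fx_m=y_{m-1}$ gives, for all $m,n\ge 1$,
\[
 d(y_m,y_n)\ \le\ c_1\max\{\,d(y_m,y_{n-1}),\ d(y_n,y_{m-1}),\ d(y_{m-1},y_{n-1})\,\}.
\]

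The heart of the argument is to show that $\{y_n\}$ is Cauchy. First I would prove the orbit is bounded: with $M_n:=\max\{d(y_0,y_j):1\le j\le n\}$, combining the triangle inequality with the displayed estimate for the pair $(x_1,x_j)$ yields, for each $n$, either $d(y_0,y_n)\le d(y_0,y_1)/(1-c_1)$ or $d(y_0,y_n)\le(1+c_1)d(y_0,y_1)+c_1 M_{n-1}$, and a short induction then gives $M_n\le(1+c_1)d(y_0,y_1)/(1-c_1)$ for all $n$; hence $\delta_0:=\sup\{d(y_i,y_j):i,j\ge 0\}<\infty$. Setting $\delta_k:=\sup\{d(y_i,y_j):i,j\ge k\}$ and applying the displayed estimate with all indices $\ge k+1$ (so that every index on the right is $\ge k$) gives $\delta_{k+1}\le c_1\delta_k$, hence $\delta_k\le c_1^k\delta_0\to 0$ and $\{y_n\}$ is Cauchy. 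I expect this boundedness/diameter bookkeeping to be the main obstacle: the naive telescoping $d(y_n,y_{n+1})\le c_1\,d(y_{n-1},y_n)$ is not available here, because the cross term $d(y_{n+1},y_{n-1})$ may realize the maximum, so one cannot simply argue as in the Banach case.

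Next I would invoke the partial completeness. If $f(X)$ is complete, the sequence $\{y_n\}$, which lies in $f(X)$, converges to some $z\in f(X)$; if instead $T(X)$ is complete, $\{y_n\}$ lies in $T(X)\subseteq f(X)$ and converges to some $z\in T(X)\subseteq f(X)$. In either case $z=fu$ for some $u\in X$. Applying \eqref{(1.7.2)} to the pair $(u,x_n)$ gives
\[
 d(Tu,y_n)\ \le\ c_1\max\{\,d(Tu,y_{n-1}),\ d(y_n,z),\ d(z,y_{n-1})\,\};
\]
letting $n\to\infty$ and using continuity of $d$ and of $\max$ together with $y_n\to z$, the right-hand side tends to $c_1\,d(Tu,z)$ and the left-hand side to $d(Tu,z)$, whence $d(Tu,z)\le c_1\,d(Tu,z)$ and therefore $Tu=fu=z$; thus $u$ is a coincidence point of $T$ and $f$.

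Finally, since $(T,f)$ is weakly compatible and $u$ is a coincidence point, $T$ and $f$ commute at $u$, so $Tz=T(fu)=f(Tu)=fz$. Putting the pair $(z,u)$ into \eqref{(1.7.2)} and using $fu=z$ and $fz=Tz$, each entry of the maximum equals $d(Tz,z)$, so $d(Tz,z)=d(Tz,Tu)\le c_1\,d(Tz,z)$, forcing $Tz=z$; hence $z=fz=Tz$ is a common fixed point of $T$ and $f$. For uniqueness, if $z$ and $w$ are common fixed points then \eqref{(1.7.2)} applied to $(z,w)$ gives $d(z,w)=d(Tz,Tw)\le c_1\,d(z,w)$, so $z=w$. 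This completes the proof plan.
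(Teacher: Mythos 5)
Your proof is correct, but it takes a genuinely different route from the paper, which in fact offers no proof of this statement at all: Theorem \ref{thrm 1.7} is quoted as an immediate specialization of Theorem 2.1 of Babu \emph{et al.} \cite{Babu2004} (take $A=B=T$, $S=T=f$, $c_2=c_3=0$), and a later remark re-derives it from Corollary \ref{Cor 3.4} of the present paper. Your argument is a self-contained \'Ciri\'c-type quasi-contraction proof: you correctly identify that the naive telescoping $d(y_n,y_{n+1})\le c_1 d(y_{n-1},y_n)$ fails because the cross term $d(y_{n+1},y_{n-1})$ may realize the maximum (only yielding the useless factor $c_1/(1-c_1)$ when $c_1\ge 1/2$), and you replace it with the orbit-boundedness induction $M_n\le(1+c_1)d(y_0,y_1)/(1-c_1)$ followed by the tail-diameter contraction $\delta_{k+1}\le c_1\delta_k$; the coincidence, weak-compatibility, and uniqueness steps are all sound. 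What your route buys is worth noting: the paper's remark claims that \eqref{(1.7.2)} implies \eqref{(3.4.1)}, but the maximum in \eqref{(1.7.2)} contains $d(Tx,fy)$ and $d(Ty,fx)$ \emph{separately}, which dominate their average $\tfrac{1}{2}[d(Tx,fy)+d(Ty,fx)]$ appearing in \eqref{(3.4.1)}, so that implication is not immediate and the claimed reduction to Corollary \ref{Cor 3.4} needs justification. Your direct proof sidesteps this entirely and establishes the theorem under exactly the stated hypothesis, at the cost of redoing the Cauchy bookkeeping that the cited Theorem 2.1 of \cite{Babu2004} would otherwise supply.
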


In Theorem \eqref{Thrm 3.1} of Babu \emph{et al.} \cite{Babu2007}, if we choose $\psi$ as the identity map on $[0,\infty)$ then we have the following:

\begin{theorem}[Babu \emph{et al.} \cite{Babu2007}, Theorem 3.1] \label{thrm 1.8}
Let $T$ and $f$ be selfmaps of a complete metric space $(X,d)$ satisfying $T(X)\subset f(X)$ and assume that there is an $r\in[0,1)$ such that
\begin{equation}
d(Tx,Ty) \leq r \max\left\{d(fx,fy),d(Tx,fx),d(Ty,fy),\tfrac{1}{2}[d(Tx,fy)+d(Ty,fx)]\right\}
\end{equation}
for all $x,y\in X$; the pair $(T,f)$ is compatible and either $T$ or $f$ is continuous, or the pair $(T,f)$ is reciprocal continuous. Then $F(T)\cap F(f)$ is a singleton.
\end{theorem}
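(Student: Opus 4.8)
The plan is to combine the Jungck iteration with the $\max$-type contractive estimate and then treat the continuity hypotheses case by case. First, fix $x_0\in X$; since $T(X)\subset f(X)$ we may choose $x_{n+1}$ with $fx_{n+1}=Tx_n$ for all $n\geq 0$, and set $y_n=Tx_n=fx_{n+1}$. Substituting $x=x_n$, $y=x_{n+1}$ into the contractive inequality, the four entries of the maximum become $d(y_{n-1},y_n)$, $d(y_{n-1},y_n)$, $d(y_n,y_{n+1})$, and $\tfrac12 d(y_{n-1},y_{n+1})$; since $\tfrac12 d(y_{n-1},y_{n+1})\leq\tfrac12\bigl[d(y_{n-1},y_n)+d(y_n,y_{n+1})\bigr]$, the maximum is at most $\max\{d(y_{n-1},y_n),d(y_n,y_{n+1})\}$. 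Were this maximum equal to $d(y_n,y_{n+1})$ we would obtain $d(y_n,y_{n+1})\leq r\,d(y_n,y_{n+1})$, forcing $y_n=y_{n+1}$; hence in all cases $d(y_n,y_{n+1})\leq r\,d(y_{n-1},y_n)$, so $d(y_n,y_{n+1})\leq r^{\,n}d(y_0,y_1)$ and $\{y_n\}$ is Cauchy. By completeness $y_n\to z$ for some $z\in X$, so $fx_n\to z$ and $Tx_n\to z$.

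Next I would show $z$ is a common fixed point, handling the three sub-hypotheses. If $f$ is continuous and $(T,f)$ is compatible, then $ffx_n\to fz$ and $fTx_n\to fz$, while compatibility gives $Tfx_n\to fz$; plugging $x=fx_n$, $y=x_n$ into the inequality and letting $n\to\infty$ (using continuity of $d$) yields $d(fz,z)\leq r\,d(fz,z)$, so $fz=z$, and then $x=z$, $y=x_n$ yields $d(Tz,z)\leq r\,d(Tz,z)$, so $Tz=z$. If instead $T$ is continuous and $(T,f)$ is compatible, then $TTx_n\to Tz$, $Tfx_n\to Tz$, and compatibility gives $fTx_n\to Tz$; plugging $x=Tx_n$, $y=x_n$ and passing to the limit gives $Tz=z$, after which $z=Tz\in T(X)\subset f(X)$ provides $u$ with $fu=z$; feeding $x=u$, $y=x_n$ into the inequality yields $Tu=z=fu$, and since the compatible pair $(T,f)$ is weakly compatible, $T$ and $f$ commute at $u$, whence $fz=fTu=Tfu=Tz=z$. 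Finally, if $(T,f)$ is reciprocal continuous (and compatible), then $fTx_n\to fz$ and $Tfx_n\to Tz$, so compatibility forces $fz=Tz$; plugging $x=z$, $y=x_n$ into the inequality then gives $d(Tz,z)\leq r\,d(Tz,z)$, so $Tz=fz=z$. In every case $z\in F(T)\cap F(f)$.

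For uniqueness, if $z,z'\in F(T)\cap F(f)$, then taking $x=z$, $y=z'$ collapses the maximum to $d(z,z')$ and yields $d(z,z')\leq r\,d(z,z')$, hence $z=z'$; therefore $F(T)\cap F(f)$ is a singleton.

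I expect the main obstacle to be the subcase in which only $T$ is continuous: continuity of $T$ controls $Tz$ but says nothing directly about $fz$, so the argument has to detour through $T(X)\subset f(X)$ to manufacture a coincidence point $u$ and then exploit weak compatibility of the (hence compatible) pair to transfer the fixed-point property from $Tz$ to $fz$. The Cauchy estimate and the other two subcases are comparatively routine, the only genuine care being the bookkeeping of which entry of the $\max$ attains the limit in each application of the contractive inequality.
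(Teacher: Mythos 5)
The paper states this result only as background (it is quoted from Babu \emph{et al.}~\cite{Babu2007} as Theorem~3.1 there) and gives no proof of it, so there is nothing internal to compare against; judged on its own terms, your argument is correct and complete. The Jungck iteration, the reduction of the four-term maximum to $\max\{d(y_{n-1},y_n),d(y_n,y_{n+1})\}$ via the triangle inequality on the $\tfrac12 d(y_{n-1},y_{n+1})$ term, the geometric Cauchy estimate, the three-way case analysis on the continuity hypotheses, and the uniqueness step all check out; in particular you correctly identified that the only delicate subcase is when $T$ alone is continuous, where one must route through $Tz\in T(X)\subset f(X)$ to produce a coincidence point $u$ and then invoke commutativity at $u$ (compatibility implies weak compatibility) to conclude $fz=z$. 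It is worth noting how your argument relates to the techniques the paper actually develops: in its main Theorem~3.1 the linear factor $r$ is replaced by a nonlinear control function $\psi$, so the geometric series argument is unavailable and the Cauchy property must be extracted by the $\varepsilon$--subsequence contradiction argument; and the continuity/compatibility trichotomy you handle in your second paragraph is replaced wholesale by weak compatibility at a coincidence point obtained from completeness of $\overline{T(K)}$, $f(K)$, or $g(K)$ --- which is precisely the sense in which the paper claims to generalize the statement you proved.
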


The purpose of this paper is to establish the existence of common fixed points for weakly compatible mappings. We extend this result to a sequence of self mappings. In these results we do not assume the properties like continuity, reciprocal continuity and compatibility of maps so that our result generalize and improve various other similar results in the literature, e.g., Al-Thagafi \emph{et al.} \cite{Al-Thagafi2006}, Babu \emph{et al.} \cite{Babu2007}, Jungck \cite{Jungck1976,Jungck1986}, Singh \cite{Singh1986,Singh1997a}, and Som \cite{Som2003}.

\section{Preliminaries}

Let $K$ be a nonempty subset of a metric space $(X,d)$ and $f$, $g$ and $T$ be three selfmaps of $K$, and $F(T)$ be the set of fixed points of $T$, i.e., $F(T) = \{x \in K : Tx = x\}$.

We denote the closure of a set $K$ by $\overline{K}$, the set of all positive integers by $\mathbb{N}$ and $\mathbb{R}_{+} = [0, \infty)$.

\begin{definition}[Jungck \emph{et al.} \cite{Jungck1998}]
Two mappings $f$ and $T$ of a metric space $(X,d)$ are called \textit{weakly compatible} if they commute at their coincidence points. That is, for $x$ in $X$, if $fx = Tx$ then $fTx = Tfx$.
\end{definition}

We observe that every compatible map is weakly compatible but every weakly compatible map need not be compatible \cite{Singh1997b}.

\begin{definition}[Zhang \emph{et al.} \cite{Zhang2008}]
A selfmap $T$ of $K$ is called $(f,g)$-\textit{Boyd-Wong contraction} if there exists $\varphi:\mathbb{R}_{+} \to \mathbb{R}_{+}$ which is upper semicontinuous and $0 \leq \varphi(t) < t$ for $t > 0$ such that
\begin{equation} \label{(2.2.1)}
d(Tx,Ty) \leq \varphi(d(fx,gy)), \quad \text{for all } x,y \in K.
\end{equation}
When $\varphi(t) = \beta t$ for some $t \in [0,1)$, then $T$ is called $(f,g)$-\textit{contraction}. If, in addition, $f = g$, $T$ is called $f$-\textit{contraction}.
\end{definition}

Al-Thagafi \emph{et al.} \cite{Al-Thagafi2006} proved the following generalization of Jungck's common fixed point theorem \cite{Jungck1986}.

\begin{theorem}[Al-Thagafi \emph{et al.} \cite{Al-Thagafi2006}, Theorem 2.1] \label{thrm 2.3}
Let $K$ be a subset of a metric space $(X,d)$, $f$ and $T$ selfmaps of $K$, and $T(K) \subseteq f(K)$. Suppose that $f$ and $T$ are weakly compatible, $T$ is $f$-contraction, and $T(K)$ is complete. Then $F(T) \cap F(f)$ is singleton.
\end{theorem}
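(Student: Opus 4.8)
The plan is to run the classical Jungck iteration, extract a coincidence point using completeness of the subspace $T(K)$, and then promote that coincidence point to a common fixed point via weak compatibility. Write $\beta\in[0,1)$ for the constant with $d(Tx,Ty)\le\beta\,d(fx,fy)$ for all $x,y\in K$ (this is the meaning of ``$T$ is $f$-contraction''). Fix $x_0\in K$; since $T(K)\subseteq f(K)$, choose inductively $x_{n+1}\in K$ with $fx_{n+1}=Tx_n$, and set $y_n=Tx_n=fx_{n+1}$. The contractive inequality gives
\begin{equation*}
d(y_n,y_{n+1})=d(Tx_n,Tx_{n+1})\le\beta\,d(fx_n,fx_{n+1})=\beta\,d(y_{n-1},y_n),
\end{equation*}
so $d(y_n,y_{n+1})\le\beta^{n}d(y_0,y_1)$ and, by the triangle inequality and summing the geometric series, $\{y_n\}$ is Cauchy in $T(K)$.

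Next I would locate the limit inside $T(K)$ and pull it back through $f$. Since each $y_n\in T(K)$ and $T(K)$ is complete, $y_n\to z$ for some $z\in T(K)$; as $T(K)\subseteq f(K)$, there is $p\in K$ with $fp=z$. This is the step where completeness of the subspace $T(K)$ — not of $X$ — is doing the work, and it is the one that needs the most care: the limit must be taken inside $T(K)$ so that a preimage under $f$ is actually available. I would then show $p$ is a coincidence point: from $d(Tp,y_n)=d(Tp,Tx_n)\le\beta\,d(fp,fx_n)=\beta\,d(z,y_{n-1})\to0$ we get $y_n\to Tp$, hence by uniqueness of limits $Tp=z=fp$.

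Finally, weak compatibility gives $f(Tp)=T(fp)$, i.e. $fz=Tz$. If $Tz\ne z$, then applying the contraction at the pair $(z,p)$,
\begin{equation*}
d(Tz,z)=d(Tz,Tp)\le\beta\,d(fz,fp)=\beta\,d(Tz,z),
\end{equation*}
forcing $d(Tz,z)=0$, a contradiction; hence $Tz=z$ and also $fz=Tz=z$, so $z\in F(T)\cap F(f)$. Uniqueness is immediate: if $z,w\in F(T)\cap F(f)$ then $d(z,w)=d(Tz,Tw)\le\beta\,d(fz,fw)=\beta\,d(z,w)$, so $z=w$. The main obstacle is really the combination in the middle paragraph — keeping the limit inside $T(K)$ to secure the preimage $p$ — together with the observation that, with no continuity available, the only lever for converting the coincidence point into a fixed point is the commutativity at coincidence points supplied by weak compatibility, after which the contraction must be invoked at $(z,p)$ rather than along the iteration.
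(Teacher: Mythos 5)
Your proof is correct and is the standard argument for this result: Jungck iteration $fx_{n+1}=Tx_n$, a geometric-series Cauchy estimate, completeness of the subspace $T(K)$ to obtain the limit $z$ and its $f$-preimage $p$, the contraction to force $Tp=z=fp$, and weak compatibility to convert the coincidence point into the unique common fixed point. The paper quotes Theorem \eqref{thrm 2.3} from Al-Thagafi and Shahzad without reproving it, and your skeleton is exactly the one the paper itself deploys (with a more elaborate contractive condition and a harder Cauchy argument) in its proof of Theorem \eqref{Thrm 3.1}, so there is nothing to add.
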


In 2006, Zhang \emph{et al.} \cite{Zhang2008} established the following common fixed point theorem using $(f,g)$-Boyd-Wong contraction.

\begin{theorem}[Zhang \emph{et al.} \cite{Zhang2008}, Theorem 2.1] \label{thrm 2.4}
Let $K$ be a subset of a metric space $(X,d)$ and $T$, $f$, $g:K \to K$ three mappings which satisfies $\overline{T(K)} \subset f(K) \cap g(K)$, and $T$ be a $(f,g)$-Boyd-Wong contraction. Suppose that either $\overline{T(K)}$ or $f(X)$ or $g(K)$ is complete. Then
\begin{enumerate}
    \item[(i)] there exists $z,u,v \in K$ such that $fu = Tu = z = Tv = gv$.
\end{enumerate}
If, in addition, $(T,f)$ and $(T,g)$ are weakly compatible, then
\begin{enumerate}
    \item[(ii)] $F(T) \cap F(f) \cap F(g)$ is singleton.
\end{enumerate}
\end{theorem}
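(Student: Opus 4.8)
The plan is to run a Picard-type iteration in which the roles of $f$ and $g$ alternate. Fix $x_{0}\in K$. Since $Tx_{0}\in T(K)\subseteq\overline{T(K)}\subseteq f(K)$, choose $x_{1}\in K$ with $fx_{1}=Tx_{0}$; since $Tx_{1}\in\overline{T(K)}\subseteq g(K)$, choose $x_{2}\in K$ with $gx_{2}=Tx_{1}$; iterating, one builds $\{x_{n}\}\subseteq K$ with $fx_{2n+1}=Tx_{2n}$ and $gx_{2n+2}=Tx_{2n+1}$ for all $n\geq0$. Put $y_{n}=Tx_{n}$, so $y_{2n}=fx_{2n+1}$ and $y_{2n+1}=gx_{2n+2}$, and write $d_{n}=d(y_{n},y_{n+1})$. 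Applying \eqref{(2.2.1)} to the pair $(x_{2n+1},x_{2n+2})$ gives $d_{2n+1}\leq\varphi(d(fx_{2n+1},gx_{2n+2}))=\varphi(d_{2n})$, and applying it to $(x_{2n+3},x_{2n+2})$ together with symmetry of $d$ gives $d_{2n+2}\leq\varphi(d(fx_{2n+3},gx_{2n+2}))=\varphi(d_{2n+1})$; hence $d_{n+1}\leq\varphi(d_{n})$ for every $n$. Since $\varphi(t)<t$ for $t>0$, the sequence $(d_{n})$ is nonincreasing, so $d_{n}\downarrow L\geq0$; letting $n\to\infty$ and using upper semicontinuity of $\varphi$ yields $L\leq\varphi(L)$, which forces $L=0$.

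The heart of the proof is showing $(y_{n})$ is Cauchy, and this is the step I expect to be most delicate. I would argue by contradiction in the standard Boyd-Wong manner: if $(y_{n})$ is not Cauchy there exist $\varepsilon>0$ and, for each $k$, indices $n_{k}<m_{k}$ with $d(y_{n_{k}},y_{m_{k}})\geq\varepsilon$; taking $m_{k}$ minimal with this property and using $d_{n}\to0$, one gets $d(y_{n_{k}},y_{m_{k}})\to\varepsilon$. Because \eqref{(2.2.1)} links $d(Tx,Ty)$ to $d(fx,gy)$, and $fx_{n}=y_{n-1}$ holds only for odd $n$ while $gx_{n}=y_{n-1}$ holds only for even $n$, one must pass to a subsequence along which $m_{k}$ and $n_{k}$ have \emph{opposite} fixed parities; this is harmless since shifting an index by one alters the relevant distance by at most $d_{n}\to0$. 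Say $m_{k}$ is odd and $n_{k}$ even; then $d(y_{m_{k}},y_{n_{k}})\leq\varphi(d(fx_{m_{k}},gx_{n_{k}}))=\varphi(d(y_{m_{k}-1},y_{n_{k}-1}))$, while the triangle inequality and $d_{n}\to0$ give $d(y_{m_{k}-1},y_{n_{k}-1})\to\varepsilon$. Taking $\limsup$ and invoking upper semicontinuity of $\varphi$ yields $\varepsilon\leq\varphi(\varepsilon)<\varepsilon$, a contradiction. (I would also record here the elementary fact, used below, that $\varphi(a_{n})\to0$ whenever $a_{n}\to0$, since $0\leq\varphi(a_{n})<a_{n}$ when $a_{n}>0$, the case $a_{n}=0$ being treated directly.)

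Now $(y_{n})$ converges. In each of the three completeness hypotheses its limit $z$ lies in the relevant complete set, and in every case $z\in\overline{T(K)}\subseteq f(K)\cap g(K)$ — this is exactly where the closure in the hypothesis is needed — so we may pick $u,v\in K$ with $fu=z=gv$. Applying \eqref{(2.2.1)} to $(u,x_{2n})$ gives $d(Tu,y_{2n})\leq\varphi(d(fu,gx_{2n}))=\varphi(d(z,y_{2n-1}))\to0$, whence $Tu=z$; applying it to $(x_{2n+1},v)$ gives $d(y_{2n+1},Tv)\leq\varphi(d(fx_{2n+1},gv))=\varphi(d(y_{2n},z))\to0$, whence $Tv=z$. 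Thus $fu=Tu=z=Tv=gv$, which is assertion (i).

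For (ii), assume $(T,f)$ and $(T,g)$ are weakly compatible. At the coincidence point $u$ we get $fz=f(Tu)=T(fu)=Tz$, and at $v$ we get $gz=g(Tv)=T(gv)=Tz$, so $fz=Tz=gz$. If $Tz\neq z$, then applying \eqref{(2.2.1)} to $(z,v)$ gives $d(Tz,z)=d(Tz,Tv)\leq\varphi(d(fz,gv))=\varphi(d(Tz,z))<d(Tz,z)$, a contradiction; hence $Tz=z$, and therefore $z=Tz=fz=gz$, so $z\in F(T)\cap F(f)\cap F(g)$. Finally, if $w$ is any element of $F(T)\cap F(f)\cap F(g)$, then $d(z,w)=d(Tz,Tw)\leq\varphi(d(fz,gw))=\varphi(d(z,w))<d(z,w)$ unless $w=z$; hence $F(T)\cap F(f)\cap F(g)$ is a singleton.
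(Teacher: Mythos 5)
The paper does not actually prove this statement: Theorem \eqref{thrm 2.4} is quoted from Zhang \emph{et al.} \cite{Zhang2008}, and the only commentary the paper offers on its proof is the Remark containing \eqref{(2.7.1)}--\eqref{(2.7.3)}, which argues that the asymmetry of the contractive condition \eqref{(2.2.1)} (where $f$ is tied to the first argument and $g$ to the second) obstructs the derivation of the monotonicity inequality \eqref{(2.7.3)} and uses this to motivate the symmetrized hypothesis \eqref{(2.7.4)}. Against that backdrop your proposal is useful: it is a correct, self-contained Boyd--Wong argument, and it shows precisely how to live with the asymmetry. In the monotonicity step you always put the odd-indexed iterate in the $f$-slot and the even-indexed one in the $g$-slot (relying on the symmetry of $d$ for the left-hand side), and in the Cauchy step you explicitly pass to subsequences along which $m_k$ and $n_k$ have opposite fixed parities before invoking \eqref{(2.2.1)}, absorbing the index shifts into $d_n\to 0$. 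The remaining steps --- locating $z\in\overline{T(K)}\subseteq f(K)\cap g(K)$ under each of the three completeness hypotheses, producing the coincidence points $u$ and $v$, using weak compatibility to get $fz=Tz=gz$, and uniqueness --- are all sound and parallel what the paper does for Theorem \eqref{Thrm 3.1}.

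One genuine, though small, loose end: you never handle the case in which some $d_n=0$. There the inequality $d_{n+1}\leq\varphi(d_n)$ only gives $d_{n+1}\leq\varphi(0)$, and the definition of an $(f,g)$-Boyd--Wong contraction as quoted does not force $\varphi(0)=0$ (upper semicontinuity at $0$ bounds $\varphi(0)$ from below by $\limsup_{t\to 0^{+}}\varphi(t)=0$, not from above). So the assertion that $(d_n)$ is nonincreasing, and your parenthetical claim that $\varphi(a_n)\to 0$ whenever $a_n\to 0$ (also used when you send $\varphi(d(z,y_{2n-1}))\to 0$), both require either the convention $\varphi(0)=0$ or a separate observation that the orbit yields a coincidence point directly once two consecutive terms agree --- compare the explicit treatment of the case $y_{2n}=y_{2n+1}$ at the start of the proof of Theorem \eqref{Thrm 3.1}. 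Add that convention or that observation and the argument is complete.
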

\begin{definition}[Song \cite{Song2007}]
A mapping $T:K \to K$ is called a \textit{generalized $(f,g)$-contraction} if there exists an $r \in [0,1)$ such that
\begin{equation} \label{(2.5.1)}
d(Tx,Ty) \leq r \max\left\{d(Tx,fx), d(Ty,gy), \tfrac{1}{2}[d(Tx,gy) + d(Ty,fx)], d(fx,gy)\right\}
\end{equation}
for all $x,y \in X$.
\end{definition}

Song \cite{Song2007} established the following common fixed point theorem for three selfmaps $T$, $f$ and $g$ where $T$ is an $(f,g)$-contraction.

\begin{theorem}[Song \cite{Song2007}] \label{thrm 2.6}
Let $K$ be a subset of a metric space $(X,d)$ and $T, f, g:K \to K$ three selfmaps which satisfies $\overline{T(K)} \subset f(K) \cap g(K)$. Suppose that $\overline{T(K)}$ is complete, and $T$ is a generalized $(f,g)$-contraction with constant $r \in [0,1)$. Then neither $C(T,f)$ nor $C(T,g)$ is empty, where $C(T,f)$ is the set of coincidence points of $T$ and $f$; and $C(T,g)$ is the set of coincidence points of $T$ and $g$.

Moreover, if, in addition, both $(T,f)$ and $(T,g)$ are weakly compatible, then $F(T) \cap F(f) \cap F(g)$ is singleton.
\end{theorem}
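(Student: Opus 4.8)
The plan is to mimic the standard Banach–Picard iteration argument, but adapted to three maps and to a completeness hypothesis placed only on $\overline{T(K)}$ rather than on the whole space.  Fix $x_0\in K$.  Since $\overline{T(K)}\subseteq f(K)\cap g(K)$, every point of $T(K)$ lies in both $f(K)$ and $g(K)$, so I can recursively choose a sequence $\{y_n\}$ and preimages so that $y_{2n}=Tx_{2n}=f x_{2n+1}$ and $y_{2n+1}=Tx_{2n+1}=g x_{2n+2}$ (alternating the roles of $f$ and $g$).  The first substantial step is to feed consecutive iterates into the generalized $(f,g)$-contraction \eqref{(2.5.1)}: plugging $x=x_{2n+1}$, $y=x_{2n+2}$ (and symmetrically) into the inequality and simplifying the $\max$ — using the triangle inequality to bound the cross term $\tfrac12[d(Tx,gy)+d(Ty,fx)]$ — yields $d(y_{n+1},y_{n+2})\le r\,d(y_n,y_{n+1})$.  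Hence $\{y_n\}$ is Cauchy in $T(K)$, and therefore in the complete space $\overline{T(K)}$ it converges to some $z\in\overline{T(K)}$.

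Next I would locate the coincidence points.  Because $z\in\overline{T(K)}\subseteq f(K)$, there is $u\in K$ with $fu=z$; I claim $Tu=z$ as well.  To see this, apply \eqref{(2.5.1)} with $x=u$ and $y=x_{2n}$ (so that $fx_{2n}$-type terms are $g x_{2n}=y_{2n-1}$ etc.), take $n\to\infty$, and use $y_n\to z=fu$; the right-hand $\max$ collapses to a multiple of $d(Tu,z)$, forcing $d(Tu,z)\le r\,d(Tu,z)$ and hence $Tu=z=fu$, so $u\in C(T,f)$.  Symmetrically, using $z\in g(K)$ pick $v$ with $gv=z$ and the same computation gives $Tv=z=gv$, so $v\in C(T,g)$; in particular neither coincidence set is empty, which is the first assertion.

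For the second assertion, invoke weak compatibility.  From $Tu=fu=z$ we get $Tz=Tfu=fTu=fz$, and from $Tv=gv=z$ we get $Tz=Tgv=gTv=gz$; write $w:=Tz=fz=gz$.  It remains to show $w=z$.  Apply \eqref{(2.5.1)} with $x=z$, $y=v$: the left side is $d(Tz,Tv)=d(w,z)$, and every entry of the $\max$ on the right reduces — using $Tz=fz=gz=w$ and $Tv=gv=z$ — to either $0$, $d(w,z)$, or $\tfrac12 d(w,z)$; thus $d(w,z)\le r\,d(w,z)$, giving $w=z$.  So $z=Tz=fz=gz$, i.e.\ $z\in F(T)\cap F(f)\cap F(g)$.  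Uniqueness follows by applying \eqref{(2.5.1)} to two common fixed points $z,z'$: $d(z,z')=d(Tz,Tz')\le r\,d(z,z')$ forces $z=z'$.

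The routine parts are the algebraic simplifications of the four-term $\max$, which repeatedly reduce to the pattern "$d(a,b)\le r\,d(a,b)\Rightarrow a=b$."  The main obstacle, and the only place requiring genuine care, is the passage to the limit in the coincidence-point step: one must verify that the cross term $\tfrac12[d(Tu,gx_{2n})+d(Tx_{2n},fu)]$ and the term $d(Tu,fx_{2n})$ behave correctly as $n\to\infty$ — in particular that the $\max$ does not retain a spurious contribution like $d(Tu,fu)$ with coefficient $1$ instead of $r$ — so that the inequality genuinely contracts $d(Tu,z)$. Handling the even/odd bookkeeping of which of $f$, $g$ appears at step $n$ is what makes this slightly more delicate than the two-map case, but it is purely mechanical once the indexing convention is fixed.
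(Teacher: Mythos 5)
Your proposal is correct, but there is nothing in the paper to compare it against: Theorem \eqref{thrm 2.6} is quoted from Song \cite{Song2007} and stated without proof. In fact, the remark immediately following it is devoted to criticizing the original argument — the authors point out that the asymmetric condition \eqref{(2.5.1)} (where $x$ is always paired with $f$ and $y$ with $g$) cannot be applied to same-parity substitutions such as $x=x_{2n}$, $y=x_{2n-1}$, and this is their motivation for replacing \eqref{(2.5.1)} by the symmetrized condition \eqref{(2.7.5)} and for the min-of-two-maxes hypothesis in Theorem \eqref{Thrm 3.1}. Your proof shows that, for the constant-$r$ version at least, this difficulty is avoidable: you only ever invoke \eqref{(2.5.1)} with an odd index (or $u$, $z$) in the $f$-slot and an even index (or $v$) in the $g$-slot, and because the contraction constant is a fixed $r<1$ the estimate $\alpha_{n+1}\le r\alpha_n$ already gives a geometric series, so the Cauchy step never requires comparing $Tx_m$ and $Tx_n$ of the same parity. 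This is precisely the step where the paper's own Theorem \eqref{Thrm 3.1}, with a general control function $\psi$, is forced into the $\varepsilon$--subsequence argument (estimating $d(y_{2m_k},y_{2n_k+1})$ rather than $d(y_{2m_k},y_{2n_k})$), and it is the reason the authors need the extra symmetry in their hypothesis. Your coincidence-point and weak-compatibility steps match the structure of the paper's Case (i) for Theorem \eqref{Thrm 3.1}, and the limit passage you flag as the delicate point is handled correctly: the term $d(Tu,fu)=d(Tu,z)$ does survive in the max, but only multiplied by $r$, so $(1-r)d(Tu,z)\le 0$ still forces $Tu=z$. In short, the proof is complete and, unlike the paper, actually supplies an argument for the cited statement.
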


To prove Theorem \eqref{thrm 2.4} and Theorem \eqref{thrm 2.6}, the respective authors chose the sequence $\{x_n\}$ as follows: Take $x_0 \in K$. As $\overline{T(K)} \subset f(K) \cap g(K)$, we choose a sequence $\{x_n\}$ in $K$ such that
\begin{equation}
(A) \quad Tx_{2n} = fx_{2n+1} \text{ and } Tx_{2n+1} = gx_{2n+2} \text{ for all } n \geq 0.
\end{equation}

\begin{remark}
Zhang \emph{et al.} \cite{Zhang2008} and Song \cite{Song2007} obtained the following inequalities using the sequence (A):
\begin{equation} \label{(2.7.1)}
  d(Tx_{2n+1},Tx_{2n}) \leq d(Tx_{2n},Tx_{2n-1})
\end{equation}
by substituting $x=x_{2n+1}$ and $y=x_{2n}$ in (2.2.1);

and similarly, by taking $x=x_{2n-1}$ and $y=x_{2n}$ we obtain
\begin{equation} \label{(2.7.2)}
  d(Tx_{2n-1},Tx_{2n}) \leq d(Tx_{2n-2},Tx_{2n-1})
\end{equation}
and concluded that, for all $n \geq 0$,
\begin{equation} \label{(2.7.3)}
  d(Tx_{n+1},Tx_n) \leq d(Tx_n,Tx_{n-1}).
\end{equation}

But if we take $x=x_{2n}$ and $y=x_{2n-1}$, for the substitution, then it is not possible to apply the inequality \eqref{(2.2.1)} to get \eqref{(2.7.3)}. The similar difficulty arises if we consider the inequality \eqref{(2.5.1)} in proving Theorem \eqref{thrm 2.6}. To avoid such difficulty, we consider the following inequalities in \eqref{(2.7.4)} and \eqref{(2.7.5)} in place of \eqref{(2.2.1)} and \eqref{(2.5.1)} respectively.
\end{remark}

\begin{equation} \label{(2.7.4)}
  d(Tx,Ty) \leq \varphi(\min\{d(fx,gy),d(fy,gx)\}) \quad \text{for all } x,y \in K;
\end{equation}
and
\begin{multline} \label{(2.7.5)}
  d(Tx,Ty) \leq r \min\Big\{\max\big\{d(fx,gy),d(Tx,fx),d(Ty,gy),\\
\tfrac{1}{2}[d(Tx,gy)d(Ty,+fx)]\big\}, \max\big\{d(gx,fy),d(Tx,gx),\\
d(Ty,gy),\tfrac{1}{2}[d(Tx,fy)+d(Ty,gx)]\big\}\Big\}
\end{multline}
for all $x,y \in K$.

The following lemmas are useful in our subsequent discussion.

\begin{lemma}[Sastry \emph{et al.} \cite{Sastry2003}, Remark 1, p. 18] \label{lem 2.8}
If $\alpha:(0,\infty)\to[0,1)$ is upper semicontinuous then there exists a continuous function $\beta:(0,\infty)\to[0,1)$ such that $\alpha(t)\leq\beta(t)$ for all $t$ in $(0,\infty)$.
\end{lemma}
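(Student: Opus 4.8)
The plan is to produce an explicit continuous majorant by a sup-convolution (a Lipschitz regularization of $\alpha$ from above). Concretely, I would set
\[
\beta(t)\;=\;\sup_{s\in(0,\infty)}\bigl(\alpha(s)-|t-s|\bigr),\qquad t\in(0,\infty).
\]
Since $0\le\alpha(s)<1$ for every $s$, each quantity $\alpha(s)-|t-s|$ is $<1$, hence $\beta(t)\le 1$ and in particular $\beta$ is finite-valued; and choosing $s=t$ gives $\beta(t)\ge\alpha(t)\ge0$. Thus $0\le\alpha\le\beta\le1$ pointwise, so $\beta$ is already a majorant with the right lower bound.

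Next I would verify continuity of $\beta$. For each fixed $s$ the function $t\mapsto\alpha(s)-|t-s|$ is $1$-Lipschitz, and a pointwise supremum of a family of $1$-Lipschitz functions which is finite somewhere is again $1$-Lipschitz: for $t,t'\in(0,\infty)$ and any $s$ one has $\alpha(s)-|t-s|\le\alpha(s)-|t'-s|+|t-t'|\le\beta(t')+|t-t'|$, and taking the supremum over $s$ yields $\beta(t)\le\beta(t')+|t-t'|$, with the reverse inequality by symmetry. Hence $|\beta(t)-\beta(t')|\le|t-t'|$, so $\beta$ is (Lipschitz, hence) continuous on $(0,\infty)$.

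The delicate point — and the place where upper semicontinuity is actually used — is showing $\beta(t)<1$ for every $t$, not merely $\beta(t)\le1$. Suppose, for contradiction, that $\beta(t)=1$ for some $t$. Then there is a sequence $s_k\in(0,\infty)$ with $\alpha(s_k)-|t-s_k|\to1$. Since $\alpha(s_k)<1$, the inequality $\alpha(s_k)-|t-s_k|<1-|t-s_k|$ forces $|t-s_k|\to0$, i.e.\ $s_k\to t$; but then $1=\lim_k\bigl(\alpha(s_k)-|t-s_k|\bigr)=\lim_k\alpha(s_k)\le\limsup_{s\to t}\alpha(s)\le\alpha(t)<1$, a contradiction. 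Therefore $\beta(t)\in[0,1)$ for all $t\in(0,\infty)$, and $\beta$ is the required continuous majorant of $\alpha$.

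I expect the verification $\beta<1$ to be the main obstacle, since that is exactly where the hypothesis of upper semicontinuity (as opposed to mere boundedness by $1$) is indispensable; finiteness, the majorization $\alpha\le\beta$, and the Lipschitz/continuity claims are routine properties of sup-convolutions. If a sharper statement were wanted, one could replace $|t-s|$ by $n|t-s|$ and obtain a decreasing sequence $\beta_n$ of continuous majorants with $\beta_n(t)\downarrow\alpha(t)$, but for the present lemma a single $\beta$ suffices.
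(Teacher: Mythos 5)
Your proof is correct. Note first that the paper itself offers no proof of this lemma: it is imported verbatim from Sastry \emph{et al.} (Remark~1, p.~18) and used as a black box in the proof of Lemma~2.10, so there is no in-paper argument to compare yours against. Your sup-convolution (Pasch--Hausdorff envelope) construction is a clean, self-contained way to establish it: the majorization $\alpha\le\beta$ and the $1$-Lipschitz continuity of $\beta$ are routine, and you correctly isolate the one genuinely delicate point, namely that $\beta(t)<1$ rather than merely $\beta(t)\le 1$. Your handling of that point is sound: if $\alpha(s_k)-|t-s_k|\to 1$ then the bound $\alpha(s_k)<1$ forces $|t-s_k|\to 0$, hence $\alpha(s_k)\to 1$, contradicting $\limsup_{s\to t}\alpha(s)\le\alpha(t)<1$. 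This is exactly where upper semicontinuity (as opposed to mere boundedness of $\alpha$ by $1$) is indispensable, as your closing remark observes. The only cosmetic quibble is that the classical Baire-type statement (a u.s.c.\ function bounded above is a decreasing limit of continuous, even Lipschitz, majorants) gives majorants $\beta_n$ with $\beta_n\ge\alpha$ but only $\beta_n\le 1$, so your extra argument securing strict inequality is not optional padding but the substantive content of the lemma; you identified and supplied it.
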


\begin{lemma}[Sastry \emph{et al.} \cite{Sastry2003}] \label{lem 2.9}
Let $\varphi:[0,\infty)\to[0,\infty)$ be continuous such that $\varphi(t)<t$ for all $t\in(0,\infty)$. Then there exists a monotonically increasing continuous function $\psi:[0,\infty)\to[0,\infty)$ such that $\varphi(t)\leq\psi(t)$ for all $t\in \mathbb{R}_{+}$ and $\psi(t)<t$ for all $t>0$.
\end{lemma}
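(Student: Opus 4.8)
The plan is to build $\psi$ by hand as the \emph{running maximum} of $\varphi$: set
\[
  \psi(t):=\max_{0\le s\le t}\varphi(s),
\]
where the maximum is attained because $\varphi$ is continuous on the compact interval $[0,t]$. Two of the required properties are then immediate from the definition: $\psi$ is nondecreasing, and $\psi(t)\ge\varphi(t)$ for every $t\in\mathbb{R}_{+}$. So the work will be to check that $\psi$ indeed takes values in $[0,\infty)$, that it is continuous, and that $\psi(t)<t$ for all $t>0$. If the word ``increasing'' is meant in the strict sense, I would finish by passing from $\psi$ to $\widetilde\psi(t):=\tfrac12\bigl(\psi(t)+t\bigr)$: this $\widetilde\psi$ is continuous, strictly increasing, still dominates $\varphi$ since $\psi(t)\le t$ (see below), and still satisfies $\widetilde\psi(t)<t$ for $t>0$.

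The two ``easy'' verifications I would dispatch first. Continuity of $\varphi$ together with $\varphi(t)<t$ on $(0,\infty)$ forces $\varphi(0)=\lim_{t\downarrow0}\varphi(t)\le 0$, hence $\varphi(0)=0$; thus $\psi(0)=0$, and $0=\varphi(0)\le\psi(t)<\infty$ for each $t$, so $\psi$ maps $[0,\infty)$ into $[0,\infty)$. For the strict bound, write $\psi(t)=\varphi(s^{\ast})$ with $s^{\ast}\in[0,t]$: if $s^{\ast}=0$ then $\psi(t)=\varphi(0)=0<t$, and if $s^{\ast}>0$ then $\psi(t)=\varphi(s^{\ast})<s^{\ast}\le t$; either way $\psi(t)<t$ (and in particular $\psi\le\mathrm{id}$, which is the inequality invoked above).

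The main obstacle will be continuity of the running maximum. Since $\psi$ is nondecreasing, one-sided limits exist everywhere and it suffices to rule out jumps. For right-continuity at $t$ I would use $\psi(t+h)=\max\bigl\{\psi(t),\,\max_{s\in[t,t+h]}\varphi(s)\bigr\}$, so that $0\le\psi(t+h)-\psi(t)\le\max\bigl\{0,\ \max_{s\in[t,t+h]}\varphi(s)-\psi(t)\bigr\}$, and the quantity $\max_{s\in[t,t+h]}\varphi(s)$ tends to $\varphi(t)\le\psi(t)$ as $h\downarrow0$ by continuity of $\varphi$, forcing the difference to $0$. For left-continuity at $t>0$ I would start from $\psi(t)=\max\bigl\{\psi(t-h),\,\max_{s\in[t-h,t]}\varphi(s)\bigr\}$ for $0<h\le t$; letting $h\downarrow0$, the second term tends to $\varphi(t)$ while $\psi(t-h)$ increases to $\psi(t^{-})\ge\lim_{h\downarrow0}\varphi(t-h)=\varphi(t)$, so the right-hand side tends to $\max\{\psi(t^{-}),\varphi(t)\}=\psi(t^{-})$, and comparing with the constant left-hand side gives $\psi(t^{-})=\psi(t)$. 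Continuity at $0$ is then clear from $0\le\psi(h)\le h$. This produces a $\psi$ with all the stated properties. (An alternative would be to run $\varphi$ through a smoothing argument in the spirit of Lemma~\ref{lem 2.8}, but the running-maximum construction is completely self-contained and I would prefer it.)
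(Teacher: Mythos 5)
Your proposal is correct in all its steps: the attainment of the maximum on $[0,t]$, the deduction $\varphi(0)=0$, the strict bound $\psi(t)=\varphi(s^{\ast})<s^{\ast}\le t$ (or $=0<t$ when $s^{\ast}=0$), the two one-sided continuity arguments for the running maximum, and the passage to $\widetilde\psi(t)=\tfrac12(\psi(t)+t)$ if strict monotonicity is wanted. There is, however, nothing in the paper to compare it against: Lemma~\ref{lem 2.9} is quoted from Sastry \emph{et al.}\ \cite{Sastry2003} and stated without proof; the only proof in that part of the paper is of Lemma~\ref{lem 2.10}, which \emph{uses} Lemmas~\ref{lem 2.8} and~\ref{lem 2.9} as black boxes (first dominating an upper semicontinuous $\varphi$ by $t\beta(t)$ with $\beta$ continuous, then invoking Lemma~\ref{lem 2.9} on $t\beta(t)$). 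So your running-maximum construction is a genuinely self-contained substitute for the citation, and it is arguably the natural one: it needs nothing beyond continuity of $\varphi$ on compacts, and the nondecreasing version of $\psi$ it produces is already enough for the way monotonicity is used later in the paper (namely $\psi(a)\le\psi(\max\{a,b\})$ in the proof of Theorem~\ref{Thrm 3.1}). The one point worth making explicit if you write this up is the justification that $\max_{s\in[t,t+h]}\varphi(s)\to\varphi(t)$ as $h\downarrow 0$ (continuity of $\varphi$ at $t$ bounds the max over $[t,t+h]$ by $\varphi(t)+\varepsilon$ for small $h$); you use this twice and it is the only place where continuity of $\varphi$, rather than mere boundedness on compacts, actually enters.
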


\begin{lemma}[Krishna Murthy \cite{KrishnaMurthy2006}] \label{lem 2.10}
Let $\varphi:\mathbb{R}_{+}\to \mathbb{R}_{+}$ be upper semicontinuous function such that $\varphi(t)<t$ for all $t>0$. Then there exists a monotonically increasing continuous function $\psi:\mathbb{R}_{+}\to \mathbb{R}_{+}$ such that $\varphi(t)\leq\psi(t)$ for all $t\in \mathbb{R}_{+}$ and $\psi(t)<t$ for all $t>0$.
\end{lemma}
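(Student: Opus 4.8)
The plan is to deduce Lemma~\ref{lem 2.10} from the two lemmas immediately preceding it: use Lemma~\ref{lem 2.8} to replace the upper semicontinuous $\varphi$ by a \emph{continuous} majorant of the same type, and then feed that majorant into Lemma~\ref{lem 2.9} to recover monotonicity. Throughout I use the normalization $\varphi(0)=0$ (this is what makes the displayed conclusion meaningful at $t=0$, and it is harmless in the applications, where the contractive inequality is only invoked at positive arguments).

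First I would rescale. For $t>0$ put $\alpha(t):=\varphi(t)/t$. Since $0\le\varphi(t)<t$ we have $\alpha:(0,\infty)\to[0,1)$, and since $\varphi$ is upper semicontinuous and $t\mapsto 1/t$ is continuous and strictly positive on $(0,\infty)$, the function $\alpha$ is again upper semicontinuous there; indeed $\limsup_{s\to t}\varphi(s)/s=(1/t)\limsup_{s\to t}\varphi(s)\le\varphi(t)/t$, using $\varphi\ge 0$. By Lemma~\ref{lem 2.8} choose a continuous $\beta:(0,\infty)\to[0,1)$ with $\alpha\le\beta$ on $(0,\infty)$.

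Next, set $\varphi_{1}(t):=t\beta(t)$ for $t>0$ and $\varphi_{1}(0):=0$. This is continuous on $(0,\infty)$, and $0\le t\beta(t)<t\to 0$ as $t\downarrow 0$, so $\varphi_{1}$ is continuous on all of $\mathbb{R}_{+}$; moreover $\varphi_{1}(t)=t\beta(t)<t$ for $t>0$ and $\varphi_{1}(t)=t\beta(t)\ge t\alpha(t)=\varphi(t)$ for $t>0$. Thus $\varphi_{1}:\mathbb{R}_{+}\to\mathbb{R}_{+}$ is continuous, lies strictly below the identity on $(0,\infty)$, and dominates $\varphi$. Applying Lemma~\ref{lem 2.9} to $\varphi_{1}$ yields a monotonically increasing continuous $\psi:\mathbb{R}_{+}\to\mathbb{R}_{+}$ with $\varphi_{1}\le\psi$ on $\mathbb{R}_{+}$ and $\psi(t)<t$ for all $t>0$. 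Then $\varphi(t)\le\varphi_{1}(t)\le\psi(t)$ for $t>0$, while $\varphi(0)=0\le\psi(0)$, so this $\psi$ is the function sought.

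The argument is essentially bookkeeping, so I do not expect a genuine obstacle; the one point that needs care is the claim that $\varphi(t)/t$ inherits upper semicontinuity — this is precisely where nonnegativity of $\varphi$ and the continuity and strict positivity of the multiplier $1/t$ on $(0,\infty)$ enter — together with the continuous extension of $t\beta(t)$ across $t=0$, which rests on the strict bound $\beta<1$.
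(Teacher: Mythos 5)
Your proposal is correct and follows essentially the same route as the paper's own proof: pass to $\varphi(t)/t$, invoke Lemma~\ref{lem 2.8} to get a continuous $\beta$ with $\varphi(t)\leq t\beta(t)$, and then apply Lemma~\ref{lem 2.9} to $t\beta(t)$. You merely supply details the paper leaves implicit (why the quotient is upper semicontinuous, why $t\beta(t)$ extends continuously to $0$, and the need to normalize $\varphi(0)=0$ for the conclusion to hold at $t=0$), all of which are sound.
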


\begin{proof}
Since $\varphi$ is upper semicontinuous on $(0,\infty)$ and $\varphi(t)<t$ for all $t\in(0,\infty)$, we have $\frac{\varphi(t)}{t}$ is upper semicontinuous from $(0,\infty)$ to $[0,1)$. Hence, by Lemma \eqref{lem 2.8}, there exists a continuous function $\beta:(0,\infty)\to[0,1)$ such that $\varphi(t)\leq t\beta(t)$ on $[0,1)$. Since $t\beta(t)$ is continuous on $[0,\infty)$, by Lemma \eqref{lem 2.9}, there exists a monotonically increasing continuous function $\psi:[0,\infty)\to[0,\infty)$ such that $\psi(t)<t$ and $t\beta(t)\leq\psi(t)$. Therefore, $\varphi(t)\leq\psi(t)$ and $\psi(t)<t$ on $[0,\infty)$.
\end{proof}

\section{Main Results}

The main result of this paper is the following.
\begin{theorem} \label{Thrm 3.1}
Let $K$ be a subset of a metric space $(X,d)$ and $T$, $f$, $g: K \to K$ be self-maps satisfying $\overline{T(K)} \subseteq f(K) \cap g(K)$. Assume:
\begin{multline} \label{(3.1.1)}
d(Tx,Ty) \leq \psi\Bigg(\min\Bigg\{\max\Big\{d(fx,gy),d(Tx,fx),d(Ty,gy),\\
\tfrac{1}{2}\left[d(Tx,gy) + d(Ty,fx)\right]\Big\}, \\
\max\Big\{d(fy,gx),d(Tx,gx),d(Ty,fy),\\
\tfrac{1}{2}\left[d(Tx,fy) + d(Ty,gx)\right]\Big\}\Bigg\}\Bigg)
\end{multline}
for all $x,y \in K$, where $\psi: \mathbb{R}_{+} \to \mathbb{R}_{+}$ is a monotonically increasing continuous function with $0 \leq \psi(t) < t$ for all $t > 0$. If either:
\begin{enumerate}
    \item[(i)] $\overline{T(K)}$ is complete, or
    \item[(ii)] $f(K)$ is complete, or
    \item[(iii)] $g(K)$ is complete
\end{enumerate}
and the pairs $(T,f)$, $(T,g)$ are weakly compatible, then there exists a unique $z \in K$ such that
\[
\{z\} = F(T) \cap F(f) \cap F(g).
\]
\end{theorem}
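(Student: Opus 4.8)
The plan is to run the classical Jungck-type argument: build a Picard-like iteration, show it is Cauchy, extract a limit using whichever of (i)--(iii) is assumed, and then promote the resulting coincidence points to a common fixed point via weak compatibility --- all the while exploiting the $\min$-of-$\max$ shape of \eqref{(3.1.1)} to bypass the parity obstruction described in the Remark.

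Fix $x_0\in K$. Since $\overline{T(K)}\subseteq f(K)\cap g(K)$, choose $\{x_n\}\subset K$ with $Tx_{2n}=fx_{2n+1}$ and $Tx_{2n+1}=gx_{2n+2}$ for all $n\ge 0$, and put $y_n=Tx_n$, $d_n=d(y_n,y_{n+1})$. The first claim is $d_n\le\psi(d_{n-1})$ for every $n\ge 1$. For even $n$ I would substitute $x=x_n$, $y=x_{n+1}$ and observe that the second $\max$-block of \eqref{(3.1.1)} is built only from $fy=fx_{n+1}=y_n$ and $gx=gx_n=y_{n-1}$ (both fixed by the relations defining $\{x_n\}$), so once the triangle inequality collapses its averaged term it equals $\max\{d_{n-1},d_n\}$; since the argument of $\psi$ is $\le$ this block and $\psi$ is increasing, $d_n\le\psi(\max\{d_{n-1},d_n\})$, which forces $d_n\le d_{n-1}$ (else $d_n\le\psi(d_n)<d_n$) and hence $d_n\le\psi(d_{n-1})$. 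For odd $n$ the first $\max$-block plays the symmetric role. Thus $\{d_n\}$ decreases to some $d\ge 0$ with $d\le\psi(d)$ by continuity of $\psi$, forcing $d=0$.

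Next I would show $\{y_n\}$ is Cauchy by contradiction. If it is not, there are $\epsilon>0$ and $m_k>n_k\to\infty$ with $d(y_{m_k},y_{n_k})\ge\epsilon$; using $d_n\to 0$ one may, after discarding at most one index on each side and shrinking $\epsilon$, assume $m_k$ even and $n_k$ odd, and after choosing $m_k$ minimal with this property, deduce $d(y_{m_k},y_{n_k})\to\epsilon$ together with $d(y_{m_k-1},y_{n_k-1})$, $d(y_{m_k},y_{n_k-1})$, $d(y_{n_k},y_{m_k-1})\to\epsilon$. With this parity the second $\max$-block of \eqref{(3.1.1)} evaluated at $x=x_{m_k}$, $y=x_{n_k}$ is expressed entirely through $y_j$'s and tends to $\epsilon$; letting $k\to\infty$ in $d(y_{m_k},y_{n_k})\le\psi(\text{that block})$ gives $\epsilon\le\psi(\epsilon)<\epsilon$, a contradiction. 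Hence $\{y_n\}$ converges: under (i) its limit lies in $\overline{T(K)}$; under (ii) the subsequence $\{y_{2n}\}=\{fx_{2n+1}\}\subset f(K)$ converges and forces convergence of the whole Cauchy sequence; under (iii) likewise via $\{y_{2n+1}\}=\{gx_{2n+2}\}\subset g(K)$. In every case $y_n\to z$ with $z\in\overline{T(K)}\subseteq f(K)\cap g(K)\subseteq K$, so $z=fu=gv$ for some $u,v\in K$.

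It remains to identify $z$. Substituting $x=u$, $y=x_{2n}$ in \eqref{(3.1.1)} (so $fx=z$ and $gx_{2n}=y_{2n-1}$ are both determined), the relevant $\max$-block tends to $d(Tu,z)$ while the left side tends to $d(Tu,z)$, yielding $d(Tu,z)\le\psi(d(Tu,z))$ and hence $Tu=z$; the symmetric substitution $x=x_{2n+1}$, $y=v$ gives $Tv=z$. Thus $Tu=fu=z$ and $Tv=gv=z$, so weak compatibility of $(T,f)$ and $(T,g)$ gives $Tz=fz$ and $Tz=gz$. Finally, substituting $x=z$, $y=v$ and using $fz=Tz$, $gv=z=Tv$ makes the relevant $\max$-block equal $d(Tz,z)$, so $d(Tz,z)\le\psi(d(Tz,z))$ and $Tz=z$; hence $z\in F(T)\cap F(f)\cap F(g)$. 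Uniqueness is the same one-line computation applied to two common fixed points $z,w$: substituting $x=z$, $y=w$ gives $d(z,w)\le\psi(d(z,w))$, so $z=w$. The only genuinely delicate step is the index bookkeeping in the Cauchy argument --- guaranteeing that the substituted points always land on the ``known'' side of the relations defining $\{x_n\}$ --- and this is precisely where \eqref{(3.1.1)}, rather than \eqref{(2.2.1)} or \eqref{(2.5.1)}, is needed.
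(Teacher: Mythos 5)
Your proposal is correct and follows essentially the same route as the paper's proof: the alternating Jungck iteration $y_{2n}=Tx_{2n}=fx_{2n+1}$, $y_{2n+1}=Tx_{2n+1}=gx_{2n+2}$, the monotone estimate $d_n\le\psi(d_{n-1})$ obtained by selecting whichever $\max$-block of \eqref{(3.1.1)} is expressible through the defining relations, the Cauchy argument by contradiction yielding $\varepsilon\le\psi(\varepsilon)$, and the passage from coincidence points to the common fixed point via weak compatibility. The only differences are cosmetic (you compare an even and an odd index directly in the Cauchy step where the paper compares $y_{2n_k+1}$ with $y_{2m_k}$ after fixing two even indices), and your index bookkeeping, though stated loosely, is no less rigorous than the paper's own.
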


\begin{proof}
Let $x_0 \in K$. Since $\overline{T(K)} \subset f(K) \cap g(K)$, we can find a sequence $\{x_n\}$ in $K$ such that
\begin{equation}
y_{2n} = Tx_{2n} = fx_{2n+1} \quad \text{and} \quad y_{2n+1} = Tx_{2n+1} = gx_{2n+2}, \quad n=0,1,2,\ldots
\end{equation}

We write
\begin{equation}
\alpha_n = d(y_n, y_{n+1}), \quad n=0,1,2,\ldots
\end{equation}

In the sequence $\{y_n\}$, if $y_{2n} = y_{2n+1}$ for some $n$, then $fx_{2n+1} = Tx_{2n} = Tx_{2n+1} = gx_{2n+2}$. Using this identity and \eqref{(3.1.1)}, it follows that $y_{2n+2} = y_{2n+1}$. Thus $y_{2n+2} = y_{2n+1} = y_{2n}$, and in general, we have $y_m = y_n$ for all $m \geq n$, so that $\{y_m\}_{m \geq n}$ is a constant sequence and hence Cauchy. Therefore, without loss of generality, we assume that $y_n \neq y_{n+1}$ for all $n \in \mathbb{N}$.

Applying \eqref{(3.1.1)} to $x = x_{2n+1}, y = x_{2n}$:
\begin{align*}
\alpha_{2n} &= d(T x_{2n+1}, T x_{2n}) \\
&\leq \psi\Big(\min\Big\{\max\big\{d(fx_{2n+1}, gx_{2n}), d(Tx_{2n+1}, fx_{2n+1}), \\
&\quad d(Tx_{2n}, gx_{2n}), \tfrac{1}{2}[d(Tx_{2n+1}, gx_{2n}) + d(Tx_{2n}, fx_{2n+1})]\big\}, \\
&\quad \max\big\{d(gx_{2n+1}, fx_{2n}), d(Tx_{2n}, fx_{2n}), d(Tx_{2n+1}, gx_{2n+1}), \\
&\quad \tfrac{1}{2}[d(Tx_{2n}, gx_{2n+1}) + d(Tx_{2n+1}, fx_{2n})]\big\}\Big\}\Big) \\
&\leq \psi \Big( \max \big\{ d(f x_{2n+1}, g x_{2n}), d(T x_{2n+1}, f x_{2n+1}), \\
&\quad d(T x_{2n}, g x_{2n}), \tfrac{1}{2}[d(T x_{2n+1}, g x_{2n}) + d(T x_{2n}, f x_{2n+1})] \big\} \Big)
\end{align*}
Substituting $f x_{2n+1} = y_{2n}$, $g x_{2n} = y_{2n-1}$ (for $n \geq 1$), $T x_{2n+1} = y_{2n+1}$, $T x_{2n} = y_{2n}$:
\begin{align*}
d(f x_{2n+1}, g x_{2n}) &= d(y_{2n}, y_{2n-1}) = \alpha_{2n-1} \\
d(T x_{2n+1}, f x_{2n+1}) &= d(y_{2n+1}, y_{2n}) = \alpha_{2n} \\
d(T x_{2n}, g x_{2n}) &= d(y_{2n}, y_{2n-1}) = \alpha_{2n-1} \\
\tfrac{1}{2}[d(T x_{2n+1}, g x_{2n}) + d(T x_{2n}, f x_{2n+1})] &= \tfrac{1}{2}[d(y_{2n+1}, y_{2n-1}) + d(y_{2n}, y_{2n})] \\
&= \tfrac{1}{2} d(y_{2n+1}, y_{2n-1}) \leq \tfrac{1}{2} (\alpha_{2n} + \alpha_{2n-1})
\end{align*}
Thus:
\[
\alpha_{2n} \leq \psi \left( \max \left\{ \alpha_{2n-1}, \alpha_{2n}, \tfrac{1}{2}(\alpha_{2n} + \alpha_{2n-1}) \right\} \right) \leq \psi \left( \max \{\alpha_{2n-1}, \alpha_{2n}\} \right)
\]
If $\max\{\alpha_{2n-1}, \alpha_{2n}\} = \alpha_{2n}$, then $\alpha_{2n} \leq \psi(\alpha_{2n}) < \alpha_{2n}$, contradiction. Hence:
\begin{equation} \label{eq:even-dec}
\alpha_{2n} \leq \psi(\alpha_{2n-1}) < \alpha_{2n-1}
\end{equation}
Similarly, applying \eqref{(3.1.1)} to $x = x_{2n+2}, y = x_{2n+1}$:
\begin{equation} \label{eq:odd-dec}
\alpha_{2n+1} \leq \psi(\alpha_{2n}) < \alpha_{2n}
\end{equation}
Thus $\{\alpha_n\}$ is decreasing and bounded below, so converges to some $r \geq 0$. If $r > 0$, continuity of $\psi$ gives $r \leq \psi(r) < r$, contradiction. Hence:
\begin{equation} \label{eq:dist-conv}
\lim_{n \to \infty} d(y_n, y_{n+1}) = 0
\end{equation}

Next we show that \textbf{$\{y_n\}$ is Cauchy}.
Suppose $\{y_n\}$ is not Cauchy. Then $\exists \varepsilon > 0$ and subsequences $\{m_k\}, \{n_k\}$ with $m_k > n_k > k$ such that:
\begin{align} 
d(y_{2m_k}, y_{2n_k}) &\geq \varepsilon \label{eq:cauchy1} \\
d(y_{2m_k-2}, y_{2n_k}) &< \varepsilon \label{eq:cauchy2}
\end{align}
By triangle inequality:
\[
\varepsilon \leq d(y_{2m_k}, y_{2n_k}) \leq d(y_{2m_k}, y_{2m_k-1}) + d(y_{2m_k-1}, y_{2m_k-2}) + d(y_{2m_k-2}, y_{2n_k})
\]
Using \eqref{eq:dist-conv} and \eqref{eq:cauchy2}:
\begin{equation} \label{eq:lim1}
\lim_{k \to \infty} d(y_{2m_k}, y_{2n_k}) = \varepsilon
\end{equation}
Similarly:
\begin{align}
\lim_{k \to \infty} d(y_{2m_k}, y_{2n_k+1}) &= \varepsilon \label{eq:lim2} \\
\lim_{k \to \infty} d(y_{2m_k-1}, y_{2n_k}) &= \varepsilon \label{eq:lim3} \\
\lim_{k \to \infty} d(y_{2m_k-1}, y_{2n_k+1}) &= \varepsilon \label{eq:lim4}
\end{align}

Now apply \eqref{(3.1.1)} to $x = x_{2n_k+1}, y = x_{2m_k}$:
\begin{align*}
d(y_{2n_k+1}, y_{2m_k}) &\leq \psi \left( \max \left\{ d(y_{2n_k}, y_{2m_k-1}), \alpha_{2n_k}, \alpha_{2m_k-1}, \tfrac{1}{2} d(y_{2n_k+1}, y_{2m_k-1}) \right\} \right)
\end{align*}
Taking $k \to \infty$ and using \eqref{eq:lim1}-\eqref{eq:lim4}:
\[
\varepsilon \leq \psi \left( \max \left\{ \varepsilon, 0, 0, \tfrac{\varepsilon}{2} \right\} \right) = \psi(\varepsilon) < \varepsilon
\]
Contradiction. Hence $\{y_n\}$ is Cauchy in $\overline{T(K)}$. Consequently, $\{Tx_n\}$ is Cauchy, and hence the sequences $\{fx_{2n+1}\}$ and $\{gx_{2n+2}\}$ are Cauchy sequences in $\overline{T(K)} \subset f(K) \cap g(K)$. Since $\{y_n\} \subseteq T(K) \subseteq \overline{T(K)} \subseteq f(K) \cap g(K)$ and one of $\overline{T(K)}$, $f(K)$, or $g(K)$ is complete, $\exists z \in X$ with $\lim y_n = z$.

\subsection*{Case (i): $\overline{T(K)}$ is complete}
Then $Tx_n \to z$ (say) as $n\to\infty$, $z\in\overline{T(K)}$. By the definition of $\{Tx_n\}$, we have
\[
gx_{2n+2}, fx_{2n+1} \to z \quad \text{as} \quad n\to\infty.
\]

Since $\overline{T(K)} \subset f(K) \cap g(K)$, there exist $u,v \in K$ such that $fu = z = gv$.

From inequality \eqref{(3.1.1)}:
\begin{align*}
d(Tx_{2n+1},Tv) &\leq \psi\Big(\min\Big\{\max\big\{d(fx_{2n+1},gv), d(Tx_{2n+1},fx_{2n+1}), d(Tv,gv), \\
&\quad \tfrac{1}{2}[d(Tx_{2n+1},gv) + d(Tv,fx_{2n+1})]\big\}, \max\big\{d(gx_{2n+1},fv), \\
&\quad d(Tv,fv), d(Tx_{2n+1},gx_{2n+1}), \\
&\quad \tfrac{1}{2}[d(Tv,gx_{2n+1}) + d(Tx_{2n+1},fv)]\big\}\Big\}\Big)\\
&\leq \psi\Big(\max\big\{d(fx_{2n+1},gv), d(Tx_{2n+1},fx_{2n+1}), d(Tv,gv), \\
&\quad \tfrac{1}{2}[d(Tx_{2n+1},gv) + d(Tv,fx_{2n+1})]\big\}\Big).
\end{align*}

Letting $n\to\infty$ and using the continuity of $\psi$:
\begin{align*}
d(z,Tv) &\leq \psi\Big(\max\big\{d(z,gv), d(z,z), d(Tv,gv), \tfrac{1}{2}[d(z,gv) + d(Tv,z)]\big\}\Big) \\
&= \psi\Big(\max\big\{0, d(Tv,z), \tfrac{1}{2}[d(z,z) + d(Tv,z)]\big\}\Big) \\
&= \psi\Big(\max\big\{d(z,z), d(Tv,z), \tfrac{1}{2}d(Tv,z)\big\}\Big) \\
&= \psi(d(Tv,z)) < d(Tv,z),
\end{align*}
a contradiction. Hence $z=Tv$, i.e., $Tv=z=gv$.

Similarly, $Tu=z=fu$.

Since $(T,f)$ and $(T,g)$ are weakly compatible and $Tu=fu=z=gv=Tv$, we have $Tfu=fTu$ and $Tgv=gTv$. Hence
\begin{equation}\label{(3.1.13)}
fz = Tz = gz.
\end{equation}

We claim that $z$ is a common fixed point of $T$, $f$, and $g$.

If $z \neq Tz$, then from \eqref{(3.1.1)}:
\begin{align*}
d(z,Tz) &= d(Tu,Tz) \\
&\leq \psi\Big(\min\Big\{\max\big\{d(fu,gz), d(Tu,fu), d(Tz,gz), \\
&\quad \tfrac{1}{2}[d(Tu,gz) + d(Tz,fu)]\big\}, \max\big\{d(gu,fz), d(Tz,fz), \\
&\quad d(Tu,gu), \tfrac{1}{2}[d(Tz,gu) + d(Tu,fz)]\big\}\Big\}\Big) \\
&\leq \psi\Big(\max\big\{d(fu,gz), d(Tu,fu), d(Tv,gz), \\
&\quad \tfrac{1}{2}[d(Tu,gz) + d(Tz,fu)]\big\}\Big) \\
&= \psi\Big(\max\big\{d(z,Tz), d(z,z), d(z,Tz), \\
&\quad \tfrac{1}{2}[d(z,Tz) + d(Tz,z)]\big\}\Big) \\
&= \psi(d(Tz,z)).
\end{align*}
Thus $Tz=z$.
From \eqref{(3.1.13)}:
\[
z \in F(T) \cap F(f) \cap F(g).
\]
The uniqueness follows from \eqref{(3.1.1)}.

\subsection*{Case (ii): $f(K)$ is complete}
Since $\{y_n\}$ is Cauchy, its subsequence $\{fx_{2n+1}\}$ is a Cauchy sequence in $f(K)$. As $f(K)$ is complete, there exists $z \in f(K)$ such that $fx_{2n+1} \to z$. Since the entire sequence $\{y_n\}$ is Cauchy and contains a convergent subsequence, $y_n \to z$. Moreover, since $\{y_n\} \subseteq T(K)$, we have $z \in \overline{T(K)} \subset f(K) \cap g(K)$. Thus there exist $u,v \in K$ such that $fu = z = gv$. The conclusion now follows exactly as in Case (i).

\subsection*{Case (iii): $g(K)$ is complete}
The proof is symmetric to Case (ii) using the subsequence $\{gx_{2n+2}\}$.

Thus in all cases, $F(T) \cap F(f) \cap F(g) = \{z\}$.
\end{proof}

Now we relax the monotonicity and continuity of $\psi$ by upper semi-continuity as follows.

\begin{corollary}
Let $K$ be a subset of a metric space $(X,d)$, and $T, f, g: K \to K$ be three mappings which satisfy $\overline{T(K)} \subset f(K) \bigcap g(K)$. Assume that
\begin{equation}
d(Tx, Ty) \leq \varphi \left( \min \left\{ 
\begin{aligned}
&\max \left\{ d(fx, gy), d(Tx, fx), d(Ty, gy), \frac{1}{2}[d(Tx, gy) + d(Ty, fx)] \right\}, \\
&\max \left\{ d(fy, gx), d(Tx, gx), d(Ty, fy), \frac{1}{2}[d(Tx, fy) + d(Ty, gx)] \right\}
\end{aligned}
\right\} \right)
\end{equation}
for all $x, y \in K$; where $\varphi: \mathbb{R}_{+} \to \mathbb{R}_{+}$ is an upper semi-continuous function on $\mathbb{R}_{+}$ and $0 \leq \varphi(t) < t$ for $t > 0$. If either $\overline{T(K)}$ or $f(K)$ or $g(K)$ is complete and the pair of mappings $(T,f)$ and $(T,g)$ are weakly compatible, then $F(T) \bigcap F(f) \bigcap F(g)$ is a singleton.
\end{corollary}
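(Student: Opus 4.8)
The plan is to reduce the corollary to Theorem \eqref{Thrm 3.1} by replacing the upper semicontinuous control function $\varphi$ with a monotonically increasing continuous majorant $\psi$. Concretely, since $\varphi:\mathbb{R}_{+}\to\mathbb{R}_{+}$ is upper semicontinuous with $0\leq\varphi(t)<t$ for all $t>0$, Lemma \eqref{lem 2.10} furnishes a monotonically increasing continuous function $\psi:\mathbb{R}_{+}\to\mathbb{R}_{+}$ such that $\varphi(t)\leq\psi(t)$ for all $t\in\mathbb{R}_{+}$ and $\psi(t)<t$ for all $t>0$. This $\psi$ satisfies exactly the hypotheses imposed on the control function in Theorem \eqref{Thrm 3.1}.

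Next I would verify that $T$, $f$, $g$ satisfy the contractive inequality \eqref{(3.1.1)} with this $\psi$. For arbitrary $x,y\in K$, write $s(x,y)$ for the value of the nested $\min$--$\max$ expression appearing as the argument of $\varphi$ in the corollary's hypothesis; note this is the same expression that appears as the argument of $\psi$ in \eqref{(3.1.1)}. Then the corollary's assumption gives $d(Tx,Ty)\leq\varphi(s(x,y))$, and since $\varphi\leq\psi$ pointwise we obtain $d(Tx,Ty)\leq\psi(s(x,y))$, which is precisely \eqref{(3.1.1)}. (No monotonicity of $\psi$ is needed for this step; monotonicity is only used inside the proof of Theorem \eqref{Thrm 3.1} itself.)

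Finally, the remaining hypotheses transfer verbatim: $\overline{T(K)}\subset f(K)\cap g(K)$, one of $\overline{T(K)}$, $f(K)$, $g(K)$ is complete, and the pairs $(T,f)$, $(T,g)$ are weakly compatible. Hence Theorem \eqref{Thrm 3.1} applies and yields a unique $z\in K$ with $\{z\}=F(T)\cap F(f)\cap F(g)$, which is the assertion of the corollary.

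I do not anticipate a genuine obstacle here; the only point requiring a modicum of care is confirming that $\psi$ produced by Lemma \eqref{lem 2.10} meets \emph{all} the structural requirements on the control function in Theorem \eqref{Thrm 3.1} (codomain $\mathbb{R}_{+}$, monotone increasing, continuous, and $0\leq\psi(t)<t$ for $t>0$) --- all of which are explicitly part of that lemma's conclusion --- and then checking that the $\min$--$\max$ argument inside $\varphi$ in the corollary is literally identical to the one inside $\psi$ in \eqref{(3.1.1)}, so that the domination $\varphi\leq\psi$ can be invoked at that exact argument.
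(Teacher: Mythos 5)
Your proposal is correct and follows essentially the same route as the paper: majorize the upper semicontinuous $\varphi$ by a monotonically increasing continuous $\psi$ with $\psi(t)<t$ for $t>0$, observe that the contractive inequality transfers pointwise, and invoke Theorem \eqref{Thrm 3.1}. In fact you cite the more appropriate lemma --- the paper's one-line proof refers to Lemma \eqref{lem 2.9}, which assumes $\varphi$ is continuous, whereas Lemma \eqref{lem 2.10} is the version matching the upper semicontinuity hypothesis of this corollary.
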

\begin{proof}
    The proof follows from Lemma \eqref{lem 2.9} and Theorem \eqref{Thrm 3.1}.
\end{proof}

\begin{example}
    
Let $X = (0,1)$ with the usual metric and let $K = \left( \frac{1}{3}, 1 \right)$. We define self-maps $T$, $f$, and $g$ on $K$ by

 \[
    T(x) = \begin{cases} 
        \frac{1}{2} & \frac{1}{3} < x < \frac{2}{3} \\
        \frac{2}{3} & \frac{2}{3} \leq x < 1 
    \end{cases}, \quad
    f(x) = \begin{cases} 
        \frac{5}{6} & \frac{1}{3} < x < \frac{2}{3} \\
        \frac{4}{3} - x & \frac{2}{3} \leq x < 1 
    \end{cases}, \quad \text{and} \quad
    g(x) = \begin{cases} 
        \frac{5}{6} & \frac{1}{3} < x < \frac{2}{3} \\
        \frac{2}{3} & x = \frac{2}{3} \\
        \frac{1}{2} & \frac{2}{3} < x < 1 
    \end{cases}
    \]

Let \(\varphi: \mathbb{R}_{+} \to \mathbb{R}_{+}\) be defined by
\[
\varphi(t) = 
\begin{cases}
\frac{1}{2}t & \text{if } 0 < t < 1, \\
\frac{2}{3} & \text{if } t = 1, \\
-\frac{1}{2}t + 1 & \text{if } 1 < t < 2, \\
0 & \text{if } t > 2.
\end{cases}
\]
Then \(\varphi(t)\) is upper semi-continuous and \(\varphi(t) < t\) for all \(t > 0\). Now we consider \(\psi: \mathbb{R}_{+} \to \mathbb{R}_{+}\) defined by \(\psi(t) = \frac{2}{3}t\). Then \(\psi\) is continuous, monotone increasing, and satisfies \(\varphi(t) \leq \psi(t)\) for all \(t \in [0, \infty)\) and \(\psi(t) < t\) for all \(t > 0\).

Here \(\overline{T(K)} = \left\{\frac{1}{2}, \frac{2}{3}\right\}\), \(f(K) = \left(\frac{1}{3}, \frac{2}{3}\right] \bigcup \left\{\frac{5}{6}\right\}\), and \(g(K) = \left\{\frac{1}{2}, \frac{2}{3}, \frac{5}{6}\right\}\), so that \(\overline{T(K)} \subset f(K) \bigcap g(K)\). The pairs of mappings \((T,f)\) and \((T,g)\) are weakly compatible on \(K\). Also, the self-maps \(T\), \(f\), and \(g\) and the map \(\psi\) satisfy the inequality \eqref{(3.1.1)}. Hence, \(T\), \(f\), and \(g\) and \(\psi\) satisfy all the hypotheses of Theorem \eqref{Thrm 3.1}, and \(\frac{2}{3}\) is the unique common fixed point of \(T\), \(f\), and \(g\).
\end{example}
\begin{corollary} \label{Cor 3.4}
Let \(K\) be a subset of a metric space \((X,d)\) and \(T, f: K \to K\) be two mappings such that \(\overline{T(K)} \subset f(K)\). Assume that
\begin{equation} \label{(3.4.1)}
d(Tx, Ty) \leq \varphi \left( \max \left\{ d(fx, fy), d(Tx, fx), d(Ty, fy), \frac{1}{2}[d(Tx, fy) + d(Ty, fx)] \right\} \right)
\end{equation}
for all \(x, y \in K\); where \(\varphi: \mathbb{R}_{+} \to \mathbb{R}_{+}\) is a monotonically increasing and continuous function on \(\mathbb{R}_{+}\) and \(0 \leq \varphi(t) < t\) for \(t > 0\). If either \(\overline{T(K)}\) or \(f(K)\) is complete and the pair of mappings \((T,f)\) is weakly compatible, then \(F(T) \bigcap F(f)\) is a singleton.
\end{corollary}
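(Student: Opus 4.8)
The plan is to deduce Corollary \ref{Cor 3.4} as the specialization of Theorem \ref{Thrm 3.1} obtained by taking $g=f$ and $\psi=\varphi$. First I would check that every hypothesis of the corollary matches, under this identification, a hypothesis of Theorem \ref{Thrm 3.1}: the inclusion $\overline{T(K)}\subseteq f(K)$ is precisely $\overline{T(K)}\subseteq f(K)\cap g(K)$ when $g=f$; the alternative ``$\overline{T(K)}$ or $f(K)$ is complete'' is ``(i) $\overline{T(K)}$, (ii) $f(K)$, or (iii) $g(K)$ is complete'' with (ii) and (iii) now coinciding; and weak compatibility of the single pair $(T,f)$ is weak compatibility of both pairs $(T,f)$ and $(T,g)$. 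The function $\varphi$ is assumed monotonically increasing, continuous, and to satisfy $0\le\varphi(t)<t$ for $t>0$ — exactly the conditions imposed on $\psi$ in Theorem \ref{Thrm 3.1} — so $\psi:=\varphi$ is an admissible choice.

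The one computation that requires a line of care is that the contractive inequality \eqref{(3.1.1)} degenerates to \eqref{(3.4.1)} when $g=f$. Substituting $g=f$ into the two inner maxima of \eqref{(3.1.1)} and using the symmetry $d(fx,fy)=d(fy,fx)$, both maxima become literally the single quantity
\[
A(x,y):=\max\left\{ d(fx,fy),\, d(Tx,fx),\, d(Ty,fy),\, \tfrac{1}{2}\bigl[d(Tx,fy)+d(Ty,fx)\bigr]\right\}.
\]
Hence $\min\{A(x,y),A(x,y)\}=A(x,y)$, the outer ``$\min$'' plays no role, and \eqref{(3.1.1)} reads $d(Tx,Ty)\le\psi\bigl(A(x,y)\bigr)$; with $\psi=\varphi$ this is exactly \eqref{(3.4.1)}.

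Having verified all hypotheses of Theorem \ref{Thrm 3.1} for the triple $(T,f,g)=(T,f,f)$, that theorem produces a unique $z\in K$ with $\{z\}=F(T)\cap F(f)\cap F(g)=F(T)\cap F(f)$, which is the assertion. I expect no genuine obstacle beyond this bookkeeping; the only subtlety, as noted, is confirming that the two component maxima in \eqref{(3.1.1)} coincide when $g=f$ so that the single-maximum form \eqref{(3.4.1)} is recovered intact. (Alternatively, applying the same $g=f$ reduction to the Corollary preceding Example~3.3 would yield the statement under the weaker assumption that $\varphi$ be merely upper semicontinuous, invoking Lemma \eqref{lem 2.9}; but routing directly through Theorem \ref{Thrm 3.1} is enough for the version stated here.)
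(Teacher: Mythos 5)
Your proposal is correct and is exactly the paper's route: the paper's entire proof is ``Follows from Theorem \ref{Thrm 3.1} by taking $g=f$ on $K$,'' and you have simply spelled out the bookkeeping, including the one point worth checking (that the two inner maxima of \eqref{(3.1.1)} coincide when $g=f$, so the outer $\min$ collapses and \eqref{(3.1.1)} reduces to \eqref{(3.4.1)}). Nothing further is needed.
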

\begin{proof}
    Follows from Theorem \eqref{Thrm 3.1} by taking \(g = f\) on \(K\).
\end{proof}

\begin{remark}
    In Corollary \eqref{Cor 3.4}, it suffices to take \(T(K) \subset f(K)\) in place of \(\overline{T(K)} \subset f(K)\) since if \(f(K)\) is complete, then it is closed and hence \(\overline{T(K)} \subset f(K)\) follows.
\end{remark}

The following example supports Corollary \eqref{Cor 3.4}.

\begin{example} \label{Ex 3.4}
Let \(X = (0,1]\) with the usual metric and \(K = \left(\frac{1}{3}, 1\right)\). We define self-maps \(T\) and \(f\) by
\[
    T(x) = \begin{cases} 
        \frac{1}{2} & \frac{1}{3} < x < \frac{2}{3} \\
        1 - \frac{1}{2}x & \frac{2}{3} \leq x < 1 
    \end{cases}, \quad \text{and} \quad
    f(x) = \begin{cases} 
        \frac{5}{6} & \frac{1}{3} < x < \frac{2}{3} \\
        \frac{4}{3} - x & \frac{2}{3} \leq x < 1 
    \end{cases}
    \]

We have \(T(K)=\left[\frac{1}{2},\frac{2}{3}\right]\) and \(f(K)=\left(\frac{1}{3},\frac{2}{3}\right]\bigcup\left\{\frac{5}{6}\right\}\) so that \(T(K)\subset f(K)\), and \(f\) and \(T\) are weakly compatible on \(K\). Also the selfmaps \(f\) and \(T\) and the function \(\varphi:\mathbb{R}_{+}\to \mathbb{R}_{+}\) defined by \(\varphi(t)=\frac{1}{2}t\) satisfy the inequality \eqref{(3.4.1)}. Hence \(f\) and \(T\) satisfy all the hypotheses of Corollary \eqref{Cor 3.4} and \(\frac{2}{3}\) is the unique common fixed point of \(T\) and \(f\).

We observe that the pair of selfmaps \((T,f)\) is not compatible on \(K\); for, take \(x_{n}=\frac{2}{3}+\frac{1}{n}\), \(n=5,6,7,\ldots\). Then \(Tx_{n}=\frac{2}{3}-\frac{1}{2n}\) and \(fx_{n}=\frac{2}{3}-\frac{1}{n}\), \(n=5,6,7,\ldots\). We note that \(Tx_{n}\rightarrow\frac{2}{3}\) as \(n\rightarrow\infty\) and \(fx_{n}\rightarrow\frac{2}{3}\) as \(n\rightarrow\infty\). Now \(T(fx_{n})=T\left(\frac{2}{3}-\frac{1}{n}\right)=\frac{1}{2}\) and \(f(Tx_{n})=f\left(\frac{2}{3}-\frac{1}{2n}\right)=\frac{5}{6}\) so that 
\[
\lim\limits_{n\rightarrow\infty}Tfx_{n}=\frac{1}{2}\neq\frac{5}{6}=\lim\limits_{n\rightarrow\infty}fTx_{n}.
\]

Also we observe that \(T\) and \(f\) are not reciprocal continuous on \(K\), for 
\[
\lim\limits_{n\rightarrow\infty}Tfx_{n}=\frac{1}{2}\neq\frac{2}{3}=T\left(\frac{2}{3}\right)
\]
and 
\[
\lim\limits_{n\rightarrow\infty}fTx_{n}=\frac{5}{6}\neq\frac{2}{3}=f\left(\frac{2}{3}\right).
\]

Hence this example shows that Corollary \eqref{Cor 3.4} which in turn Theorem \eqref{Thrm 3.1} is a generalization of Theorem \eqref{thrm 1.1}, Theorem \eqref{thrm 1.2}, Theorem \eqref{thrm 1.5} and Theorem \eqref{thrm 1.8}.

Interestingly the functions \(T\) and \(f\) do not satisfy the contraction condition of Al-Thagafi \textit{et. al.} [1], Theorem \eqref{thrm 2.3}, for when \(x=\frac{2}{3}\) and \(y\in\left(\frac{1}{3},\frac{2}{3}\right)\), \(|Tx-Ty|=\frac{1}{6}\) and \(|fx-fy|=\frac{1}{6}\). So, \(d(Tx,Ty)\leq r d(fx,fy)\) does not hold for any \(0<r<1\). Hence corollary \eqref{Cor 3.4} is a generalization of Al-Thagafi and Shahzad Theorem (Theorem \eqref{thrm 2.3}) too.
\end{example}
\begin{remark}
    Theorem \eqref{thrm 1.7} follows as a corollary to Corollary \eqref{Cor 3.4}, since the inequality \eqref{(1.7.1)} implies the inequality \eqref{(3.4.1)}. Example \eqref{Ex 3.4} shows that the mappings \(T\) and \(f\) do not satisfy the contraction conditions of Theorem \eqref{thrm 1.7}, for when \(x=\frac{2}{3}\) and \(y\in\left(\frac{1}{3},\frac{2}{3}\right)\), we have \(|Tx-Ty|=\frac{1}{6}\), \(|fx-fy|=\frac{1}{6}\), \(|Tx-fy|=\frac{1}{6}\) and \(|fx-Ty|=\frac{1}{6}\) so that the inequality \eqref{(1.7.1)} does not hold for any \(r>0\).
\end{remark}

\begin{remark}
   The requirement of the completeness of \(f(K)\) or \(T(K)\) is essential in the results so obtained and can not be relaxed.
\end{remark}

\begin{example}
    Let \(X=\left\{0,1,\frac{1}{2},\frac{1}{2^{2}},\frac{1}{2^{3}},...\right\}\) be a metric with the usual metric. We define selfmaps \(T\), \(f:X\to X\) by
    \[
    Tx = \begin{cases} 
        \frac{1}{4} & x = 0 \\
        \frac{1}{4}x & x \neq 0 
    \end{cases} \quad \text{and}, \quad
    fx = \begin{cases} 
        \frac{1}{2} & x = 0 \\
        \frac{1}{2}x & x \neq 0 
    \end{cases}
    \]

We can verify that all the conditions of Corollary \eqref{Cor 3.4} are satisfied except the completeness of the space \(f(K)\) and \(T(K)\). Note that \(T\) and \(f\) have no common fixed points.
\end{example}

\section{Additional Results}

\begin{theorem}\label{thrm 4.1}
    Let \(K\) be a subset of a metric space \((X,d)\). Let \(T\), \(f\) be two selfmappings of \(K\) with
\begin{equation} \label{(4.1.1)}
d(T^{m}x,T^{m}y) \leq \varphi\left(\max\left\{
\begin{aligned}
&d(fx,fy), d(T^{m}x,fx), d(T^{m}y, fy), \\
&\frac{1}{2}[d(T^{m}x,fy) + d(fx,T^{m}y)]
\end{aligned}
\right\}\right)
\end{equation}
for all \(x,y \in K\), where \(\varphi:\mathbb{R}_{+} \to \mathbb{R}_{+}\) is monotonically increasing and continuous on \(\mathbb{R}_{+}\) and \(0 \leq \varphi(t) < t\) for \(t > 0\). Further assume that either \(T^{m}(K)\) or \(f(K)\) is complete and the pair of mappings \((T^{m},f)\) is weakly compatible for some \(m \in \mathbb{N}\). Then \(T^{m}\) and \(f\) have a unique common fixed point. Further, if \(F(T^{m}) \cap F(f) \neq \emptyset\), and if \(T\) and \(f\) commute on \(F(T^{m}) \cap F(f)\), then \(f\) and \(T\) have a unique common fixed point in \(K\).
\end{theorem}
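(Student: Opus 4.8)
The plan is to prove the statement in two stages. First, I would apply Corollary \eqref{Cor 3.4} with $T^m$ playing the role of $T$ (and $f$ as $f$, $\varphi$ as $\varphi$): the hypothesis \eqref{(4.1.1)} is exactly \eqref{(3.4.1)} with $T$ replaced by $T^m$, the pair $(T^m,f)$ is weakly compatible, and either $T^m(K)$ or $f(K)$ is complete. Note however that Corollary \eqref{Cor 3.4} requires $\overline{T^m(K)}\subset f(K)$; by the Remark following it, when $f(K)$ is complete it is closed so $T^m(K)\subset f(K)$ suffices, and when $T^m(K)$ is complete one similarly only needs $T^m(K)\subset f(K)$. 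So from \eqref{(4.1.1)} together with the (implicit) inclusion $T^m(K)\subset f(K)$, Corollary \eqref{Cor 3.4} yields a unique $z\in K$ with $\{z\}=F(T^m)\cap F(f)$, i.e. $T^m z = f z = z$.

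The second stage upgrades $z$ from a common fixed point of $T^m$ and $f$ to a common fixed point of $T$ and $f$. This is the standard "$m$-th root" trick. Assuming $T$ and $f$ commute on $F(T^m)\cap F(f)=\{z\}$, we have $fz=z$ and $Tz\in F(T^m)$ since $T^m(Tz)=T(T^m z)=Tz$; moreover $f(Tz)=T(fz)=Tz$ using commutativity at $z$, so $Tz\in F(T^m)\cap F(f)$. By the uniqueness already established in stage one, $Tz=z$. Hence $Tz=fz=z$, so $z$ is a common fixed point of $T$ and $f$. Uniqueness of the common fixed point of $T$ and $f$ is immediate: any common fixed point $w$ of $T$ and $f$ satisfies $T^m w = w$ and $fw=w$, so $w\in F(T^m)\cap F(f)=\{z\}$, forcing $w=z$.

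The main obstacle — really the only subtlety — is bookkeeping the inclusion hypothesis: the statement of Theorem \eqref{thrm 4.1} does not literally list $T^m(K)\subset f(K)$, so I would either note that it is implicit (as Corollary \eqref{Cor 3.4} and its Remark make clear that $T^m(K)\subset f(K)$ is the natural and minimal requirement once one of the subspaces is complete) or simply invoke the Remark after Corollary \eqref{Cor 3.4} to reduce $\overline{T^m(K)}\subset f(K)$ to $T^m(K)\subset f(K)$. Everything else is a direct citation of Corollary \eqref{Cor 3.4} plus the short commutation argument above; no new estimates are needed.
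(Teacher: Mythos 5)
Your proposal is correct, and its first stage (invoking Corollary \eqref{Cor 3.4} for the pair $(T^{m},f)$ to obtain the unique $z$ with $T^{m}z=fz=z$) is exactly what the paper does. Where you genuinely diverge is in the passage from a common fixed point of $T^{m}$ and $f$ to one of $T$ and $f$. The paper argues by contradiction through the contractive inequality \eqref{(4.1.1)}: assuming $Tz\neq z$, it estimates $d(Tz,z)=d(T^{m}(Tz),T^{m}z)$ via \eqref{(4.1.1)}, then uses the commutativity of $T$ and $f$ on $F(T^{m})\cap F(f)$ to replace $fTz$ by $Tfz=Tz$ inside the maximum and arrive at $d(Tz,z)\leq\varphi(d(Tz,z))<d(Tz,z)$. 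You instead observe that $T^{m}(Tz)=T(T^{m}z)=Tz$ and $f(Tz)=T(fz)=Tz$ (the latter using commutativity at $z$), so that $Tz\in F(T^{m})\cap F(f)=\{z\}$ and the already-established uniqueness forces $Tz=z$ with no further appeal to \eqref{(4.1.1)}. Your route is shorter and makes clear that the contractive inequality plays no role in the second stage once commutativity at $z$ is granted; both arguments are valid. You are also right to flag that the inclusion $T^{m}(K)\subset f(K)$ is nowhere listed in Theorem \eqref{thrm 4.1} yet is needed to invoke Corollary \eqref{Cor 3.4}; the paper's proof silently assumes it, so stating it explicitly and reducing $\overline{T^{m}(K)}\subset f(K)$ to $T^{m}(K)\subset f(K)$ via closedness of a complete subspace is a genuine improvement in rigor rather than a defect of your argument.
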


\begin{proof}
    Since \(T(K) \subset K\), we have \(T^{n}(K) \subset K\) for all \(n \in \mathbb{N}\) and hence \(T^{m}\) is a selfmap on \(K\). Since \(T^{m}\) and \(f\), for some \(m \in \mathbb{N}\), satisfy all the hypotheses of Corollary \eqref{Cor 3.4}, \(F(T^{m}) \cap F(f)\) is a singleton. Hence there is a unique \(z \in K\) such that
\begin{equation} \label{(4.1.2)}
T^{m}(z) = f(z) = z
\end{equation}

Now by applying \(T\) to both sides of \(T^{m}(z) = z\), we get
\begin{equation} \label{(4.1.3)}
T^{m}(T(z)) = T(z)
\end{equation}
Hence, \(T(z)\) is also a fixed point of \(T^{m}\).

Now we claim that \(T(z) = z\).

Since \(F(T) \subset F(T^{n})\) for any \(n \in \mathbb{N}\), we have \(F(T) \subset F(T^{m})\). This implies that \(F(T) \cap F(f) \subset F(T^{m}) \cap F(f)\). But \(F(T^{m}) \cap F(f) = \{z\}\).

Hence,
\begin{equation} \label{(4.1.4)}
F(T) \cap F(f) \subset \{z\}
\end{equation}

We now show that \(F(T) \cap F(f) \neq \emptyset\) so that from the inclusion \eqref{(4.1.4)}, we get \(F(T) \cap F(f) = \{z\}\).

If \(Tz \neq z\), then using inequality \eqref{(4.1.1)} and using \eqref{(4.1.2)} and \eqref{(4.1.3)}, we have
\begin{align*}
d(Tz, z) &= d(T^{m}(Tz), T^{m}z) \\
&\leq \varphi\left(\max\left\{
\begin{aligned}
&d(fTz, fz), d(T^{m}(Tz), f(Tz)), d(T^{m}z, fz), \\
&\frac{1}{2}[d(T^{m}(Tz), fz) + d(f(Tz), T^{m}z)]
\end{aligned}
\right\}\right) \\
&= \varphi\left(\max\left\{
\begin{aligned}
&d(fTz, z), d(Tz, f(Tz)), d(z, z), \\
&\frac{1}{2}[d(Tz, z) + d(f(Tz), z)]
\end{aligned}
\right\}\right) \\
&\leq \varphi\left(\max\left\{
d(fTz, z), d(Tz, f(Tz)), 0, \frac{1}{2}[2 \max \{d(Tz, z), d(f(Tz), z)\}]
\right\}\right) \\
&= \varphi\left(\max\left\{d(fTz, z), d(Tz, fTz),  d(Tz, z)\right\}\right).
\end{align*}

This implies that
\begin{equation}
d(Tz, z) \leq \varphi\left(\max\left\{d(fTz, z), d(Tz, fTz), 0, d(Tz, z)\right\}\right).
\end{equation}

Now since \(T\) and \(f\) commute on \(F(T^{m}) \cap F(f)\), from \eqref{(4.5.1)} we get
\begin{align*}
d(Tz, z) &\leq \varphi\left(\max\left\{d(Tfz, z), d(Tz, Tfz),  d(Tz, z)\right\}\right) \\
&= \varphi(d(Tz, z)) < d(Tz, z),
\end{align*}
a contradiction. Hence \(Tz = z\) so that \(F(T) \cap F(f) \neq \emptyset\).

Hence \(z\) is the unique common fixed point of \(T\) and \(f\).
\end{proof}

\begin{example}
    Let \(X = \left(\frac{1}{4}, 4\right)\) with the usual metric and let \(K = \left(\frac{1}{4}, 3\right]\). We define mappings \(T\) and \(f\) on \(K\) by

     \[
    T(x) = \begin{cases} 
        1 & \frac{1}{4} < x < \frac{1}{3} \\
        \frac{1}{x} & \frac{1}{3} \leq x \leq \frac{23}{15} \\
        1 & \frac{23}{15} < x \leq 3 
    \end{cases}, \quad \text{and} \quad
    f(x) = \begin{cases} 
        1 & \frac{1}{4} < x < \frac{1}{3} \\
        -\frac{5}{4}x + \frac{9}{4} & \frac{1}{3} \leq x \leq \frac{23}{15} \\
        1 & \frac{23}{15} < x \leq 3 
    \end{cases}
    \]

Here we observe that \(T\) and \(f\) do not satisfy the inequality \eqref{(3.4.1)}, for, taking \(x = 1\), and \(y = \frac{1}{3}\), then \(|Tx - Ty| = 2\), \(|fx - fy| = \frac{5}{6}\), \(|x - Ty| = 0\), \(|fy - Ty| = \)

We have $\frac{7}{6}$, and $\frac{1}{2}\left(|Tx-fy| + |fx-Ty|\right) = \frac{17}{12}$. So, the inequality \eqref{(3.4.1)} does not hold for any $r$ in $[0,1)$.

Since $T^2:K\to K$ defined by
\[
T^2x = \left\{
\begin{array}{ccc}
1 & \text{if} & \frac{1}{4}<x<\frac{1}{3} \\ 
\\ 
x & \text{if} & \frac{1}{3}\leq x\leq\frac{23}{15} \\ 
\\ 
1 & \text{if} & \frac{23}{15}<x\leq 3;
\end{array}
\right.
\]
with $T^2(K)=\left\{\frac{1}{3},\frac{23}{15}\right\}$ and $f(K)=\left\{\frac{1}{3},\frac{22}{12}\right\}$ so that $T^2(K)\subset f(K)$, and $f$ and $T^2$ are weakly compatible on $K$. Also the maps $f$ and $T^2$ satisfy the inequality \eqref{(4.1.1)} with $\varphi:\mathbb{R}_{+}\to \mathbb{R}_{+}$ defined by $\varphi(t)=\frac{4}{5}t$ and hence $f$ and $T^2$ satisfy all the hypotheses of Theorem \eqref{thrm 4.1}, and 1 is the unique common fixed point of $T^2$ and $f$. Moreover, $fT=Tf$ on $F(T^2)\cap F(f)$. Hence, 1 is also the unique common fixed point of $T$ and $f$.
\end{example}

\begin{remark}
    In Theorem \eqref{thrm 4.1}, if $T$ and $f$ do not commute on $F(T^m)\cap F(f)$ for any $m\in \mathbb{N}$, then $T$ and $f$ may not have a common fixed point, as illustrated by the following example.
\end{remark}

\begin{example}
    Let $X=(0,2]$ with the usual metric and let $K=\left(\frac{1}{3},2\right]$. We define $T,f:K\to K$ by

    \[
    Tx = \begin{cases} 
        2 & \frac{1}{3} < x < 1 \\
        \frac{1}{x} & 1 \leq x \leq 2 
    \end{cases} \quad \text{and},
    fx = \begin{cases} 
        2 & \frac{1}{3} < x < 1 \\
        \frac{3}{2}x - 1 & 1 \leq x \leq 2 
    \end{cases}
    \]

Since $T^2:K\to K$ defined by
\[
T^2x = \left\{
\begin{array}{ccc}
\frac{1}{2} & \text{if} & \frac{1}{3}<x<1 \\ 
\\ 
x & \text{if} & 1\leq x\leq 2;
\end{array}
\right.
\]
with $T^2(K)=\left\{\frac{1}{2}\right\}\cup[1,2]$ and $f(K)=\left[\frac{1}{2},2\right]$ so that $T^2(K)\subset f(K)$, and $f$ and $T^2$ are weakly compatible on $K$. Also the maps $f$ and $T^2$ satisfy the inequality \eqref{(4.1.1)} with $\varphi:\mathbb{R}_{+}\to \mathbb{R}_{+}$ defined by $\varphi(t)=\frac{2}{3}t$ and hence $f$ and $T^2$ satisfy all the hypotheses of Theorem \eqref{thrm 4.1}.

All the hypotheses of Theorem \eqref{thrm 4.1} are satisfied, and $2$ is the unique common fixed point of $T^{2}$ and $f$. We have
\[
F(T^{m}) \cap F(f) = \left\{
\begin{array}{cc}
\{2\} & \text{if } m \text{ is even} \\ 
\\ 
\emptyset & \text{if } m \text{ is odd.}
\end{array}
\right.
\]

Hence $F(T^{m}) \cap F(f) \neq \emptyset$ for all $m$ even, and $Tf(2) = T(2) = \frac{1}{2}$ while $fT(2) = f\left(\frac{1}{2}\right) = 2$, so that $T$ and $f$ do not commute on $F(T^{m}) \cap F(f)$ for any $m$ even; consequently, $T$ and $f$ have no common fixed points.
\end{example}

We now extend Corollary \eqref{Cor 3.4} to a sequence of selfmaps.

\begin{theorem} \label{thrm 4.5}
    Let $\{T_{n}\}_{n=1}^{\infty}$ and $f$ be selfmaps of a subset $K$ of a metric space $(X, d)$ satisfying the condition
\begin{equation}\label{(4.5.1)}
T_{1}(K) \subset f(K).
\end{equation}
Assume that
\begin{equation} \label{(4.5.2)}
d(T_{1}x, T_{j}y) \leq \varphi\left(\max\left\{
\begin{aligned}
&d(fx, fy), d(T_{1}x, fx), d(T_{j}y, fy), \\
&\tfrac{1}{2}[d(T_{1}x, fy) + d(T_{j}y, fx)]
\end{aligned}
\right\}\right)
\end{equation}
for all $x, y$ in $K$ and for all $j$, where $\varphi: \mathbb{R}_{+} \to \mathbb{R}_{+}$ is monotonically increasing and continuous on $\mathbb{R}_{+}$ and $0 \leq \varphi(t) < t$ for $t > 0$. Further assume that either $T_{1}(K)$ or $f(K)$ is complete and the pair of mappings $(T_{1}, f)$ is weakly compatible. Then $\{T_{n}\}_{n=1}^{\infty}$ and $f$ have a unique common fixed point in $K$.
\end{theorem}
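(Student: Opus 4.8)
The plan is to reduce everything to Corollary \eqref{Cor 3.4} applied to the single pair $(T_1,f)$, and then to "bootstrap" from the fixed point of $T_1$ to all the $T_j$. First I would observe that setting $j=1$ in the hypothesis \eqref{(4.5.2)} yields exactly the contractive inequality \eqref{(3.4.1)} for the pair $(T_1,f)$ with the same control function $\varphi$. Moreover, if $T_1(K)$ is complete it is closed in $X$, so $\overline{T_1(K)}=T_1(K)\subset f(K)$; and if $f(K)$ is complete it is closed, so $\overline{T_1(K)}\subseteq\overline{f(K)}=f(K)$. In either case $\overline{T_1(K)}\subset f(K)$, one of $\overline{T_1(K)}$ or $f(K)$ is complete, and $(T_1,f)$ is weakly compatible, so all hypotheses of Corollary \eqref{Cor 3.4} are met. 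Hence there is a unique $z\in K$ with $F(T_1)\cap F(f)=\{z\}$, i.e. $T_1z=fz=z$.

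Next I would show $T_jz=z$ for every $j$. Fix $j$ and substitute $x=y=z$ into \eqref{(4.5.2)}. Using $T_1z=fz=z$, the four quantities in the maximum become $d(fz,fz)=0$, $d(T_1z,fz)=0$, $d(T_jz,fz)=d(T_jz,z)$, and $\tfrac12[d(T_1z,fz)+d(T_jz,fz)]=\tfrac12 d(T_jz,z)$, so the maximum equals $d(T_jz,z)$ and the inequality reads $d(z,T_jz)\le\varphi\big(d(T_jz,z)\big)$. If $T_jz\neq z$ this forces $d(z,T_jz)\le\varphi(d(z,T_jz))<d(z,T_jz)$, a contradiction; hence $T_jz=z$. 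Since $j$ was arbitrary, $z$ is a common fixed point of $f$ and of every $T_n$. For uniqueness, any common fixed point $w$ of $\{T_n\}$ and $f$ is in particular a common fixed point of $T_1$ and $f$, so $w\in F(T_1)\cap F(f)=\{z\}$, giving $w=z$.

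I do not expect a serious obstacle here: the argument is essentially a specialization of Theorem \eqref{Thrm 3.1}/Corollary \eqref{Cor 3.4} followed by a one-line substitution. The only points requiring care are (a) checking that the closure hypothesis $\overline{T_1(K)}\subset f(K)$ of Corollary \eqref{Cor 3.4} genuinely follows from $T_1(K)\subset f(K)$ together with completeness of $T_1(K)$ or $f(K)$ (handled above via closedness of complete subspaces, and already noted in the Remark following Corollary \eqref{Cor 3.4}), and (b) confirming that the maximum in \eqref{(4.5.2)} collapses to $d(T_jz,z)$ after the substitution, which is the computation just given.
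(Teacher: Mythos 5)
Your proposal is correct and follows essentially the same route as the paper: apply Corollary \eqref{Cor 3.4} to the pair $(T_1,f)$ to obtain the unique point $z$ with $T_1z=fz=z$, then substitute $x=y=z$ into \eqref{(4.5.2)} to force $T_jz=z$ for every $j$, with uniqueness inherited from the pair $(T_1,f)$. Your explicit verification that $\overline{T_1(K)}\subset f(K)$ follows from the completeness hypothesis is a small point of care that the paper leaves implicit (it is touched on only in the remark after Corollary \eqref{Cor 3.4}), but it does not change the argument.
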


\begin{proof}
    By Corollary \eqref{Cor 3.4}, $T_{1}$ and $f$ have a unique fixed point $z$ in $K$. Thus,
\begin{equation} \label{(4.5.3)}
T_{1}z = fz = z.
\end{equation}

Now, let $j \in \mathbb{N}$ with $j \neq 1$. Then from the inequality \eqref{(4.5.2)} and using \eqref{(4.5.3)}, we get
\begin{align*}
d(T_{1}z, T_{j}z) &\leq \varphi\left(\max\left\{
\begin{aligned}
&d(fz, fz), d(T_{1}z, fz), d(T_{j}z, fz), \\
&\tfrac{1}{2}[d(T_{1}z, fz) + d(T_{j}z, fz)]
\end{aligned}
\right\}\right) \\
&= \varphi\left(\max\left\{d(z, z), d(z, z), d(T_{j}z, z), \tfrac{1}{2}[d(z, z) + d(T_{j}z, z)]\right\}\right) \\
&= \varphi(d(T_{j}z, z)) < d(T_{j}z, z),
\end{align*}
a contradiction. Hence $T_{j}z = z$.

The uniqueness of $z$ follows from inequality \eqref{(4.5.2)}. Therefore, $z$ is the unique common fixed point of the sequence of selfmaps $\{T_{n}\}_{n=1}^{\infty}$ and $f$.
\end{proof}

\begin{remark}
    Theorem \eqref{Thrm 1.6} follows as a corollary to Theorem \eqref{thrm 4.5} in the sense that inequality \eqref{(1.6.1)} implies inequality \eqref{(4.5.2)} with $\varphi:\mathbb{R}_{+}\to \mathbb{R}_{+}$ defined by $\varphi(t)=rt$, where $r=a_{1}+a_{2}+a_{3}+a_{4}+a_{5}<1$. Also, Theorem \eqref{thrm 4.5} shows that the condition $T_{n}(X)\subset T(X)$ for all $n$ in Theorem 3 of Som \cite{Som2003} is redundant. In fact, $T_{1}(K)\subset T(K)$ is sufficient.
\end{remark}

\begin{example}
    Let $X=(0,1]$ with the usual metric and $K=\left(\frac{1}{3},1\right]$. We define $\{T_{n}\}_{n=1}^{\infty}$ and $f$ on $K$ by
    \[
    T_n x = \begin{cases} 
        \frac{1}{3} + \frac{1}{6n} & \frac{1}{3} < x < \frac{2}{3} \\
        1 - \frac{1}{2}x & \frac{2}{3} \leq x < 1 
    \end{cases}, \quad
    f(x) = \begin{cases} 
        1 & \frac{1}{3} < x < \frac{2}{3} \\
        \frac{4}{3} - x & \frac{2}{3} \leq x < 1 
    \end{cases}
    \]

Here $T_{1}(K)=\left[\frac{1}{2},\frac{2}{3}\right]$ and $f(K)=\left(\frac{1}{3},\frac{2}{3}\right]\bigcup\{1\}$ so that $T(K)\subset f(K)$, and for each $n$, $f$ and $T_{n}$ are weakly compatible on $K$. Also for each $n$, the maps $f$ and $T_{n}$ satisfy the inequality \eqref{(4.5.2)} with $\varphi:\mathbb{R}_{+}\to \mathbb{R}_{+}$ defined by $\varphi(t)=\frac{1}{2}t$ and $f$ and $T_{n}$ satisfy all the hypotheses of Theorem \eqref{thrm 4.5}, and $\frac{2}{3}$ is the unique common fixed point of $T_{n}$ and $f$, $n=1,2,3,...$.

We observe that the pair of maps $(T_{n},f)$ is not compatible on $K$; for, take $x_{n}=\frac{2}{3}+\frac{1}{n}$, $n=4,5,6,...$. Then for each $n$, we have $fx_{n}=\frac{2}{3}-\frac{1}{n}$ and $T_{j}x_{n}=\frac{2}{3}-\frac{1}{2n}$. Note that $T_{j}x_{n}\rightarrow\frac{2}{3}$ as $n\rightarrow\infty$ and $fx_{n}\rightarrow\frac{2}{3}$ as $n\rightarrow\infty$. Now $T_{j}(fx_{n})=\frac{1}{3}+\frac{1}{6j}$ and $f(T_{j}x_{n})=1$ for each $j=1,2,3,...$ so that
\[
\lim_{n\rightarrow\infty}T_{j}(fx_{n})=\frac{1}{3}+\frac{1}{6j}\neq 1=\lim_{n\rightarrow\infty}f(T_{j}x_{n})
\]

Also we observe that $T$ and $f$ are not reciprocally continuous on $K$; for,
\[
\lim_{n\rightarrow\infty}T_{j}(fx_{n})=\frac{1}{3}+\frac{1}{6j}\neq\frac{2}{3}= T_{j}\left(\frac{2}{3}\right), \text{ and}
\]
\[
\lim_{n\rightarrow\infty}f(T_{j}x_{n})=1\neq\frac{2}{3}=f\left(\frac{2}{3}\right), \text{ for any } j=1,2,3,...
\]

Hence, this example suggests that Theorem \eqref{thrm 4.5} is a generalization of Theorem \eqref{Thrm 1.6}.
\end{example}

\section*{Acknowledgments}
The authors sincerely thank Prof. G.V.R. Babu for critical discussions throughout the preparation of this paper.


\end{document}